\journalname{BIT}
\begin{document}

\title{Riemannian Inexact Newton Method for Structured Inverse Eigenvalue and Singular Value Problems
}
%
%

\author{
Chun-Yueh Chiang\thanks {Center for General Education, National Formosa
University, Huwei 632, Taiwan (\texttt{chiang@nfu.edu.tw}).
This
research was supported in part  by the Ministry of Science and Technology of Taiwan under grant 105-2115-M-150-001.}
\and
Matthew M. Lin\thanks{Corresponding author. Department of Mathematics, National Cheng Kung University, Tainan 701, Taiwan (\texttt{mhlin@mail.ncku.edu.tw}).
This
research was supported in part by the Ministry of Science and Technology of Taiwan
under grant 107-2115-M-006 -007 -MY2.}
\and
Xiao-Qing Jin\thanks
{Department of Mathematics, University of Macau, Macao, China (\texttt{xqjin@umac.mo}).
This research was supported in part by the research grant MYRG2016-00077-FST from University of Macau.}
}

\date{Received: date / Accepted: date}
\maketitle

\begin{abstract}
Inverse eigenvalue and singular value problems have been widely discussed for decades. The well-known result is the Weyl-Horn condition, which presents the relations between the eigenvalues and  singular values of an arbitrary matrix.
This result by Weyl-Horn then leads to an interesting inverse problem, i.e., how to construct a matrix with desired eigenvalues and singular values.
In this work, we do that and more. We propose an eclectic mix of techniques from differential geometry and the inexact Newton method for solving inverse eigenvalue and singular value problems as well as additional desired characteristics such as nonnegative entries, prescribed diagonal entries, and even predetermined entries.
We show theoretically that our method converges globally and quadratically, and we provide numerical examples to demonstrate the robustness and accuracy of our proposed method. 
{Having theoretical interest, we provide in the appendix a necessary and sufficient condition for the existence of a $2\times 2$ real matrix, or even a nonnegative matrix, with prescribed eigenvalues, singular values, and main diagonal entries. 
}

\subclass{15A29 \and 65H17.}
\end{abstract}

\keywords{Inverse eigenvalue and singular value problems, nonnegative matrices, Riemannian inexact Newton method.}


\titlerunning{Riemannian INM for structured IESPs}


\section{Introduction}

Let $|\lambda_1|\geq |\lambda_2|\geq \cdots\geq |\lambda_n| \geq 0$ and $\sigma_1 \geq \cdots\geq \sigma_n \geq 0$ be the eigenvalues and singular values of a given $n\times n$ matrix $A$.   In~\cite{Weyl1949} Weyl showed that sets of eigenvalues and singular values satisfy the following necessary condition:
\begin{subequations}\label{eq:Weyl}
\begin{eqnarray}
&&\prod_{j=1}^k|\lambda_j| \leq \prod_{j=1}^k \sigma_j, \quad k = 1,\ldots, n-1,\\
&&\prod_{j=1}^n |\lambda_j| = \prod_{j=1}^n \sigma_j.
\end{eqnarray}
\end{subequations}
Moreover, Horn~\cite{Horn1954b} proved that condition~\eqref{eq:Weyl}, called the Weyl-Horn condition, is also sufficient for constructing triangular matrices with prescribed eigenvalues and singular values.  Research interest in inverse eigenvalue and singular value problems can be tracked back to the open problem raised by Higham
 in~\cite[Problem 26.3]{Higham1996}, as follows:
 \vskip .1in
\begin{minipage}{0.9\textwidth}
 Develop an efficient algorithm for computing a unit upper triangular $n\times n$ matrix with the prescribed singular values $\sigma_1,\ldots,\sigma_n$, where
$\prod_{j=1}^n \sigma_j = 1$.
\end{minipage}
\vskip .1in
%
\noindent This problem, which was solved by Kosowski and Smoktunowicz~\cite{Kosowski2000}, leads to the following interesting inverse eigenvalue and singular value problem (IESP):
 \vskip .1in
\begin{minipage}{0.9\textwidth}
 (\textbf{IESP}) Given two sets of numbers  ${\boldsymbol{\lambda}}
 = \{\lambda_1, \ldots, \lambda_n\}$ and $\boldsymbol{\sigma}
 = \{\sigma_1, \ldots,\sigma_n\}$
satisfying~\eqref{eq:Weyl}, find a real $n\times n$ matrix with eigenvalues ${\boldsymbol{\lambda}}$ and singular values ${\boldsymbol{\sigma}}$.
\end{minipage}
\vskip .1in

\noindent

The following factors make the IESP difficult to solve:
 \begin{itemize}
 \item Often the desired matrices are real. This problem was solved by the authors of~\cite{Chu2000} with prescribed real {eigenvalues} and singular values. {The method for finding a general real-valued matrix with prescribed complex-conjugate eigenvalues and singular values was also investigated in~\cite{Li2001}. In this work, we take an alternative approach to tackle this problem and add further constraints.}

 \item Often the desired matrices are structured. Corresponding to physical applications, the recovered matrices often preserve some common structure such as nonnegative entries or predetermined diagonal entries~\cite{Chu1999,Chu2017}. {In this paper, specifically, we offer the condition of the existence of a nonnegative matrix provided that eigenvalues, singular values, and diagonal entries are given.}  Furthermore, solving the IESP with respect to the diagonal constraint is not enough because entries of the recovered matrices should preserve certain patterns, for example, non-negativity, which correspond to original observations. How to tackle this structured problem is the main thrust of this paper.

 \end{itemize}

{The IESP can be regarded as a natural generalization of the inverse eigenvalue problems, which is known for its a wide variety of applications such as the pole assignment problem~\cite{EKChu1996,Li1997,Datta2006}, applied mechanics~\cite{Gohberg82,Datta00,Nichols01,Datta02,Chu07b}, and inverse Sturm-Liouville problem~\cite{Golub73,Golub87,Gladwell04,Moller2015}. Thus applications of the IESP could be found in wireless communication~\cite{Rao2005,Cotae2006,Tropp2004} and quantum  information science~\cite{Deakin1992,Jacek2009,Chu2017}. }
Research results advanced thus far for the IESP do not fully address the above scenarios. Often, given a set of data, the IESP is studied in parts. That is, there have been extensive investigations of the conditions for the existence of a matrix when the singular values and eigenvalues are provided (i.e., the Weyl-Horn condition~\cite{Weyl1949,Horn1954b}),  when the singular values and main diagonal entries are provided (i.e., the Sing-Thompson condition~\cite{Sing1976,Thompson1977}), or when the eigenvalues and main diagonal entries are provided (i.e., the Mirsky condition~\cite{Mirsky1958}). {Also, the above conditions have given rise to numerical approaches, as found in~\cite{Chan1983,Chu1995,Chu1999,Chu2000,Dhillon2005,Kosowski2000,Zha1995}. 

Our significance in this work is to consider these conditions together.}
One relatively close result is given in~\cite{Chu2017}, where the authors consider a new type of IESP that requires that all three constraints, i.e., eigenvalues, singular values, and diagonal entries, be satisfied simultaneously.
Theoretically, Wu and Chu generalize the classical Mirsky, Sing-Thompson, and Weyl-Horn conditions and provide one sufficient condition for the existence of a matrix with prescribed eigenvalues,  singular values, and diagonal entries
when $n\geq 3$.
Numerically, Wu and Chu establish a dynamic system for  constructing such a matrix, in which real eigenvalues are given.
In this work, we solve an IESP with complex conjugate eigenvalues and with entries fixed at certain locations.
Also, we provide the necessary and sufficient condition of the existence of a $2\times 2$ nonnegative matrix with prescribed eigenvalues,  singular values, and  diagonal elements.  Note that, in general, the solution of the IESP is not unique or difficult to find once structured requirements are added.
To solve an IESP with some specific feature, we combine techniques from differential geometry and for solving nonlinear equations.

%
%
%
%
%
%

We organize this paper as follows.
In section 2, we propose the use of the Riemannian inexact Newton method for solving an IESP with complex conjugate eigenvalues. In section 3, we show that the convergence is quadratic. In section 4, we demonstrate the application of our technique to an IESP with a specific structure that includes nonnegative or predetermined entries 
to show the robustness and efficiency of our proposed approaches. 
{The concluding remarks and the solvability of the IESP of a $2\times 2$ matrix are given in section 5 and the appendix, respectively.}

\section{Riemannian inexact Newton method}\label{sec_newton}
In this section, we explain how the Riemannian inexact Newton method can be applied to the IESP. The problem of optimizing a function on a matrix manifold has received much attention in the {scientific} and engineering fields due to its peculiarity and capacity.
Its applications include, but are not limited to, the study of eigenvalue problems~\cite{Chu1990,Chu1991,Chu1992,Absil2008,Alexandrov2006,Chu05,Chu2008,Zhao2015,Zhao2016a,Zhao2016b,Chu2017,Zhao2017}, matrix low rank approximation~\cite{Boyd2005,Igor2007}, and nonlinear matrix equations~\cite{Chu2015,Chu2016}. Numerical methods for solving problems involving matrix manifolds rely on interdisciplinary
inputs from differential geometry, optimization theory, and  gradient flows.

To begin, let $\mathcal{O}(n) \subset \mathbb{R}^{n\times n}$  be the group of $n\times n$ real orthogonal matrices, and
let
${\boldsymbol{\lambda}}
 = \{\lambda_1, \ldots, \lambda_n\}$
 and $\boldsymbol{\sigma}
 = \{\sigma_1, \ldots,\sigma_n\}$ be the eigenvalues
 and singular values of an $n\times n$ matrix. We assume without loss of generality that:
 \begin{equation*}
 \lambda_{2i-1} = \alpha_i + \beta_i \sqrt{-1}, \quad
 \lambda_{2i} = \alpha_i - \beta_i \sqrt{-1}, \quad
 i = 1,\ldots, k; \quad \lambda_i\in \mathbb{R}, \quad i=2k+1,\ldots,n,
 \end{equation*}
 where $\alpha_i, \beta_i \in\mathbb{R}$ with $\beta_i\neq 0$ for $i = 1,\ldots, k$, and we define the corresponding block diagonal matrix
\begin{align*}
\begin{array}{ccl}
 \Lambda
&\!\!=\!\!&\mathrm{diag}\left \{
\left[
   \begin{array}{rr}
     \alpha_{1}& \beta_{1} \\
     -\beta_{1} &  \alpha_{1}\\
   \end{array}
 \right],\ldots, \left[
   \begin{array}{rr}
     \alpha_{k} & \beta_{k} \\
     -\beta_{k} & \alpha_{k}\\
   \end{array}
 \right], {\lambda}_{2k+1},\ldots, {\lambda}_{2n} \right \}\label{form_H}
\end{array}
\end{align*}
 and the diagonal matrix
 \begin{align*}
\begin{array}{ccl}
 \Sigma
&\!\!=\!\!&\mathrm{diag}\left \{
    {\sigma}_{1},\ldots, {\sigma}_{n} \right \}.
\end{array}
\end{align*}

  Then the IESP is equivalent to
 finding matrices
 $U,\, V,\, Q\in \mathcal{O}(n)$,  and
 $$W\in\mathcal{W}(n)
:= \{W\in\mathbb{R}^{n\times n} \,|\,
W_{i,j} = 0 \mbox{ if }
\Lambda_{i,j} \neq 0
 \mbox{ or }
i\geq j, \mbox{ for }\, 1\leq i,j \leq n
\},$$
which satisfy the following equation:
 \begin{equation}\label{eq:IESP0}
 F(U,V,Q,W) =U\Sigma V^\top - Q(\Lambda+W)Q^\top = \mathbf{0}.
 \end{equation}
 Here, we may assume without loss of generality that $Q$ is an identity matrix and simplify Eq.~\eqref{eq:IESP0} as follows:
 \begin{equation}\label{eq:IESP}
 F(U,V,W) =U\Sigma V^\top - (\Lambda+W) = \mathbf{0}.
 \end{equation}

Let $X = (U,V,W) \in
\mathcal{O}(n)
 \times \mathcal{O}(n) \times \mathcal{W}(n)$.  Upon using Eq.~\eqref{eq:IESP}, we can see that we might solve the IESP by
\begin{equation}\label{eq:fx}
\mbox{finding }X \in
\mathcal{O}(n)
 \times \mathcal{O}(n) \times \mathcal{W}(n)
\mbox{ such that } F(X) = \mathbf{0},
\end{equation}
where $F:
\mathcal{O}(n)  \times \mathcal{O}(n) \times \mathcal{W}(n)
\rightarrow \mathbb{R}^{n\times n}
$ is continuously differentiable.  By making an initial guess, $X_0$, one immediate way to solve Eq.~\eqref{eq:fx} is to apply the Newton method and generate a sequence of iterates by
solving
\begin{eqnarray}\label{eq:nw1}
DF(X_{k}) [\Delta X_k] = -F(X_k),
\end{eqnarray}
for $\Delta X_k \in T_{X_k}(\mathcal{O}(n) \times
 \mathcal{O}(n) \times \mathcal{W}(n))$ and set
\begin{eqnarray*}
X_{k+1} = R_{X_k} (\Delta X_k),
\end{eqnarray*}
where $DF(X_k)$ represents the differential of $F$ at $X_k$ and $R$ is a retraction on $\mathcal{O}(n) \times \mathcal{O}(n) \times \mathcal{W}(n)$. Since Eq.~\eqref{eq:nw1} is an underdetermined system,  it may have more than one solution.
Let $DF(X_k)^*$ be the adjoint operator of $DF(X_k)$.
 In our calculation, we choose the solution $\Delta X_k$ with the minimum norm by letting~\cite[Chap.  6]{Luenberger1969}
\begin{equation}\label{eq:NewtonStep}
\Delta X_k =DF(X_k)^* [\Delta Z_k],
\end{equation}
where $\Delta Z_k \in T_{F(X_k)} (\mathbb{R}^{n\times n})$ is a solution for
\begin{equation}\label{eq:conjugate}
\left (DF(X_k)\circ DF(X_k)^* \right )[\Delta Z_k] = -F(X_k).
\end{equation}
Note that {the notation $\circ$ represents the composition of two operators $DF(X_k)$ and $DF(X_k)^*$.} This implies that
the operator $DF(X_k)\circ DF(X_k)^*$ is symmetric and positive semidefinite. If, as is the general case, the operator 
{
$DF(X_k)\circ DF(X_k)^*:
T_{F(X_k)} (\mathbb{R}^{n\times n}) \rightarrow \mathbb{R}^{n\times n}
$
}
is invertible, we can compute the optimal solution in~\eqref{eq:NewtonStep}.

Note that solving for the root of Eq.~\eqref{eq:conjugate} could  be unnecessary  and computationally {time-consuming}, and that the linear model given by Eq.~\eqref{eq:conjugate} is large-scale or the resulting iteration $X_k$ is far from the root of condition~\eqref{eq:fx}~\cite{Simonis2006}.  By analogy with the classical Newton method~\cite{Eisenstat1994}, we adopt the ``inexact" Newton method  on Riemannian manifolds, i.e., without solving Eq.~\eqref{eq:conjugate}
exactly, we  repeatedly apply the
conjugate gradient
(CG) method to find $\Delta Z_k\in T_{F(X_k)} (\mathbb{R}^{n\times n})$, such that:
\begin{equation}\label{eq:inm}
\|(DF(X_k)\circ DF(X_k)^*) [\Delta Z_k] + F(X_k)\| \leq \eta_k \|F(X_k)\|,
\end{equation}
for some constant $\eta_k \in [0,1)$, is satisfied. Then, we update $X_k$ corresponding to $\Delta Z_k$ until the stopping criterion is satisfied. Here, the notation $\|\cdot\|$ is the Frobenius norm.
Note that in our calculation, {the elements} in the product space $\mathbb{R}^{n\times n} 
\times \mathbb{R}^{n\times n}\times \mathbb{R}^{n\times n}$ are computed using the standard Frobenius inner product:
\begin{equation}\label{eq:InNorm}
\left< (A_1, A_2,A_3),
(B_1, B_2,B_3)
\right>_{F}
:=  \left< A_1,B_1
\right> + \left< A_2,B_2
\right> + \left< A_3,B_3
\right>,
\end{equation}
where $\left< A,B
\right> := \mbox{trace}(AB^\top) $ for any $A, B\in\mathbb{R}^{n\times n}$ {and the induced norm $\|X\|_{F}
= \sqrt{\left<X,X\right>_{F}
}$
(or, simply, $\left<X,X\right>$ and
$\|X\|$ without the risk of confusion)
for any $X
\in \mathbb{R}^{n\times n} 
\times \mathbb{R}^{n\times n}\times \mathbb{R}^{n\times n}$.}

%

Then, the linear mapping $DF(X_k)$
at $\Delta X_k = (\Delta U_k,\Delta V_k, 
\Delta W_k)
   \in T_{X_k}(\mathcal{O}(n)
   \times \mathcal{O}(n) \times \mathcal{W}(n))$
is given by:
%
%
\begin{equation*} 
DF(X_k)[\Delta X_k]  =
 \Delta U_k \Sigma V_k^\top
   + U_k\Sigma \Delta V_k^\top
   -  \Delta W_k.
\end{equation*}
Let
$DF(X_k)^*: T_{F(X_k)} (\mathbb{R}^{n\times n}) \rightarrow
T_{X_k}(\mathcal{O}(n)
 \times \mathcal{O}(n) \times \mathcal{W}(n))
$
be the adjoint
of the mapping $DF(X_k)$.
%
%
  The adjoint $DF(X_k)^*$
is determined by the following:
\begin{equation*}
\left<
\Delta Z_k, DF(X_k)[\Delta X_k]
\right>  = \left<
DF(X_k)^*[\Delta Z_k], \Delta X_k
\right>
\end{equation*}
and can be expressed as follows:
%
\begin{equation*}
DF(X_k)^*[\Delta Z_k]
{=(\Delta U_k,\Delta V_k,\Delta W_k),}
\end{equation*}
where
\begin{eqnarray*}
\Delta U_k &=&
\frac{1}{2}(\Delta Z_k V_k \Sigma^\top
-U_k \Sigma V_k^\top \Delta Z_k^\top U_k
),
\\
\Delta V_k &=&
%
\frac{1}{2}(
\Delta Z_k^\top U_k\Sigma
-V_k
\Sigma^\top U_k^\top
 \Delta Z_k V_k
),
\\
\Delta W_k &=&
%
-H\odot
\Delta Z_k,
\end{eqnarray*}
with the notation $\odot$ representing the Hadamard product (see~\cite{Chu1990,Zhao2017} for a similar discussion).

%

%
%
%
%
%

%

There is definitely no guarantee that the application of the inexact Newton method can achieve a sufficient decrease in the size of the nonlinear residual $\|F(X_k)\|$. This provides motivation for  deriving an iterate for which the size of the nonlinear residual is decreased. One way to do this is to update the Newton step $\Delta X_k$ obtained from Eq.~\eqref{eq:NewtonStep}
by choosing $\theta \in [\theta_{\mathrm{min}}, \theta_{\mathrm{max}}]$, with $0<\theta_{\mathrm{min}}<\theta_{\mathrm{max}}<1$, and setting
\begin{equation}\label{eq:eta1}
\widehat{\Delta X}_k = \Delta X_k, \quad
\hat{{\eta}}_k = \frac{\|F(X_k) + DF(X_k) {\Delta X}_k\|}{\|F(X_k)\|}, 
\end{equation}
and $\eta_k = \hat{\eta}_k$.
Then, we update
\begin{align}\label{eq:update}
& \eta_k \leftarrow 1-\theta(1-\eta_k) \mbox{ and }
 \Delta X_k \leftarrow
 \frac{1-\eta_k}{1-\hat{\eta}_k} \widehat{\Delta X}_k,
\end{align}
while
\begin{equation*}
\|F(X_k)\| - \|F(R_{X_k}(\Delta X_k))\|  > t(1-{\eta}_k)\|F(X_k)\|,
\end{equation*}
or, equivalently,
\begin{equation}\label{eq:line}
 \|F(R_{X_k}(\Delta X_k))\|  < [1-t(1-{\eta}_k)]\|F(X_k)\|,
\end{equation}
for some $t \in[0,1)$~\cite{Eisenstat1994}. Let $qf(\cdot)$ denote the mapping that sends a matrix to the $Q$ factor of its $QR$ decomposition with its
$R$ factor having strictly positive diagonal elements~\cite[Example 4.1.3]{Absil2008}.
Then,  for all $(\xi_U, \xi_V,\xi_W) \in T_{(U,V,W)}  \left(\mathcal{O}(n) \times \mathcal{O}(n) \times \mathcal{W}(n)\right)$,
we can compute the retraction $R$ using the following formula:
\begin{equation*}
R_{(U,V,W)} (\xi_U, \xi_V,\xi_W) = (R_U(\xi_U),R_V(\xi_V),
R_W(\xi_W)),
\end{equation*}
where
\begin{equation*}
R_U(\xi_U) = qf(U+\xi_U), \quad
R_V(\xi_V) = qf(V+\xi_V), \quad
R_W(\xi_W) = W+\xi_W.
\end{equation*}
We call this the Riemannian inexact Newton backtracking method (RINB) and formalize this method in Algorithm~1. To choose the parameter $\theta \in [\theta_{\mathrm{min}}, \theta_{\mathrm{max}}]$, we apply a two-point parabolic model~\cite{Kelley1995,Zhao2017} to achieve a sufficient decrease among steps {6 to 9}. That is, we use the iteration history to model an approximate minimizer of the following scalar function:
\begin{equation*}
f(\lambda) := \|F(R_{X_k} (\lambda \Delta X_k))\|^2
\end{equation*}
by defining a parabolic model, as follows:
\begin{equation*}
p(\lambda) = f(0) + f'(0)\lambda  +   (f(1)-f(0)-f'(0)) \lambda^2,
\end{equation*}
where $f(0) = \|F(X_k)\|^2$, $f'(0) = 2\left< DF(X_k) [\Delta X_k], F(X_k)
\right >
$, and $f(1) = \|F(R_{X_k}(\Delta X_k))\|^2$. 
%
%

{From~\eqref{eq:inm}, it can be shown that the function evaluation
$f'(0)$ should be negative.}
Since $f'(0) < 0$, if $
p''(\lambda) = 2(f(1) - f(0) - f'(0)) > 0$,
then $p(\lambda)$ has its minimum at:
\[
\theta = \frac{-f'(0)}{2(f(1) - f(0) - f'(0))} > 0;
\]
otherwise, if $p''(\lambda) < 0$, we choose $\theta = \theta_{\mathrm{max}}$.
By incorporating two types of selection, we can choose the following:
\begin{equation*}
\theta = {\min}\left\{
{\max}\left\{
\theta_{{\min}},  \frac{-f'(0)}{2(f(1) - f(0) - f'(0))}\right\}, \theta_{{\max}}
\right\}.
\end{equation*}
as the parameter $\theta$ in Algorithm~1~\cite{Kelley1995,Zhao2017}.  In the next section, we mathematically investigate the convergence analysis of Algorithm~1.
%
%
%
%
%


%

\begin{table}[htp]
\begin{center}
\begin{tabular}{l}
\hline
Algorithm 1:  The {Riemannian} inexact Newton backtracking method  \hfill $[X] = \textsc{RINB}( \boldsymbol{\sigma},  \boldsymbol{X_0})$\\
\hline
\textbf{Input:} An initial value $X_0$\\
\textbf{Output:} A numerical solution $X$ {satisfying} $F(X) = \mathbf{0}$\\
 \phantom{1}1\, \textbf{begin}\\
\phantom{1}2 \quad\, Let $\eta_{\mathrm{max}} \in [0.1)$, $\eta_0 =
\mathrm{min}\{
\eta_{\mathrm{max}}, \|F(X_0)\|\}$, and
$t\in[0,1)$, and $0<\theta_{\mathrm{min}}<\theta_{\mathrm{max}}<1$ be given.\\
\phantom{1}3 \quad\, \textbf{repeat}\\
\phantom{1}4 \quad\, \quad\,{{Determine ${\Delta Z_k}$ by using the CG method to~\eqref{eq:conjugate} until~\eqref{eq:inm} holds.}}\\
\phantom{1}5 \quad\, \quad\,{Set $ {\Delta X}_k  = (DF(X_k))^* {\Delta Z_k}$, $\hat{\eta}_k = \frac{\|F(X_k) + DF(X_k) {\Delta X}_k\|}{\|F(X_k)\|}
$, $ \widehat{\Delta X}_k = \Delta X_k$, and ${\eta}_k = \hat{\eta}_k$.
}\\
\phantom{1}6 \quad\, \quad\,\textbf{repeat}\\
\phantom{1}7 \quad\, \quad\, \quad\, Choose $\theta \in [\theta_{\mathrm{min}},
\theta_{\mathrm{max}}
]$.\\
\phantom{1}8 \quad\, \quad\, \quad\,  Update
 $\eta_k \leftarrow 1-\theta(1-\eta_k)$
and ${\Delta X}_k \leftarrow
 \frac{1-\eta_k}{1-\hat{\eta}_k}
\widehat{\Delta X}_k$.\\
\phantom{1}9  \quad\, \quad\,\textbf{until} \eqref{eq:line} holds;\\
10\phantom{1}  \quad  \quad Set $X_{k+1} = R_{X_k} (\Delta X_k)$ and
$\eta_{k+1} = \mathrm{min}\left\{\eta_k,\eta_{\mathrm{max}}, {\|F(X_{k+1})\|} \right\}$.\\
11\phantom{1}  \quad  \quad Replace $k$ by $k+1$.\\
12\phantom{1} \quad \textbf{until} ${\|F(X_{k})\|} < \epsilon $;\\
13\phantom{1} \quad $X = X_k$.\\
14\phantom{1} \textbf{end}\\
\hline
\end{tabular}
\end{center}
\label{default}
\end{table}%

\section{Convergence Analysis}
By combining the classical inexact Newton method~\cite{Eisenstat1994} with optimization techniques on matrix manifolds, Algorithm~1 provides a way to solve the IESP. However, we have yet to theoretically discuss the convergence analysis of Algorithm~1. In this section, we
provide a theoretical foundation for the RINB method, and show that this RINB method converges globally and finally converges quadratically when Algorithm~1 does not terminate prematurely. We address this phenomenon in the following:

\begin{lemma}
Algorithm~1 does not break down at some $X_k$ if and only if $F(X_k)\neq \mathbf{0}$ and the inverse of $DF(X_k)\circ DF(X_k)^*$ exists.
\end{lemma}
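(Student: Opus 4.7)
My plan is to do a case analysis on the points in Algorithm~1 where a break down can occur at iteration~$k$, and to match these potential failures with the two hypothesized conditions. Writing $A_k := DF(X_k)\circ DF(X_k)^*$, the only two steps whose success is not unconditional are step~4, where CG applied to~\eqref{eq:conjugate} must produce a $\Delta Z_k$ satisfying~\eqref{eq:inm}, and step~5, where $\hat{\eta}_k$ is formed as a ratio whose denominator is $\|F(X_k)\|$. All remaining operations---the adjoint evaluation $\Delta X_k = DF(X_k)^*[\Delta Z_k]$, the scalar update of $\eta_k$, the rescaling in~\eqref{eq:update}, and the retraction $R_{X_k}$ built from $qf$---are unconditionally well defined once steps~4 and~5 succeed and $\hat{\eta}_k<1$.

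For the ``only if'' direction I use the contrapositive. If $F(X_k)=\mathbf{0}$, the denominator in step~5 vanishes and $\hat{\eta}_k$ is undefined, so the algorithm breaks down immediately. If instead $A_k$ is singular, then, since $A_k$ is already symmetric positive semidefinite (as recorded after~\eqref{eq:conjugate}), $\mathrm{range}(A_k)$ is a proper subspace of $\mathbb{R}^{n\times n}$. Decomposing $F(X_k)$ orthogonally with respect to $\mathrm{range}(A_k)$, CG can only reduce the residual $\|A_k\Delta Z_k + F(X_k)\|$ down to the norm of the component of $F(X_k)$ orthogonal to $\mathrm{range}(A_k)$. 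Generically this lower bound is strictly positive, while the tolerance $\eta_k\in[0,1)$---which by the initialization $\eta_0=\min\{\eta_{\mathrm{max}},\|F(X_0)\|\}$ can be made arbitrarily small---cannot be met, so the loop in step~4 fails to terminate and the algorithm breaks down.

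For the ``if'' direction, assume $F(X_k)\neq\mathbf{0}$ and $A_k$ is invertible. Combined with symmetric positive semidefiniteness, invertibility upgrades $A_k$ to being symmetric positive definite, so CG applied to~\eqref{eq:conjugate} converges to the exact solution in at most $n^2$ iterations; in particular, some CG iterate satisfies~\eqref{eq:inm} for any prescribed $\eta_k\in(0,1)$. Then $\Delta X_k = DF(X_k)^*[\Delta Z_k]$ is defined, and $\hat{\eta}_k = \|F(X_k)+DF(X_k)\Delta X_k\|/\|F(X_k)\|$ is a well-defined real number because $\|F(X_k)\|>0$. By~\eqref{eq:inm} we have $\hat{\eta}_k\le \eta_k<1$, so $1-\hat{\eta}_k>0$ and the rescaling in~\eqref{eq:update} is legitimate. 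The retraction $R_{X_k}$ is well defined because $qf$ applies to the perturbed orthogonal matrices $U_k+\xi_U$ and $V_k+\xi_V$. Hence every statement at iteration $k$ executes successfully.

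The main technical obstacle is the one flagged in the ``only if'' direction: turning the heuristic claim ``singular $A_k$ prevents CG from satisfying~\eqref{eq:inm}'' into a precise argument requires the orthogonal decomposition of $F(X_k)$ relative to $\mathrm{range}(A_k)$ together with the observation that the initialization forces $\eta_k$ to be arbitrarily small. The rest of the proof is routine inspection of Algorithm~1 line by line.
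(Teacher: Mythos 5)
The paper states this lemma \emph{without proof}, treating it as self-evident from the structure of Algorithm~1, so there is no ``paper proof'' to compare against; your task was effectively to reconstruct the implicit reasoning. Your line-by-line case analysis correctly isolates the two operations whose success is conditional (step~4's CG loop and step~5's division by $\|F(X_k)\|$) and your ``if'' direction is sound: symmetric positive definiteness of $A_k := DF(X_k)\circ DF(X_k)^*$ gives CG termination in at most $n^2$ steps, $\|F(X_k)\|>0$ makes $\hat{\eta}_k$ well defined, and $\hat{\eta}_k\le\eta_k<1$ from~\eqref{eq:inm} makes the rescaling in~\eqref{eq:update} legitimate.

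The weak point is the singular case in the ``only if'' direction, and you yourself flag it with the word ``generically.'' If $A_k$ is singular but $F(X_k)$ lies in $\mathrm{range}(A_k)$ and $\eta_k>0$, then CG applied to the consistent singular system drives the residual to zero, so~\eqref{eq:inm} is met and no breakdown occurs; singularity of $A_k$ alone does not logically force failure. The lemma is therefore best read as a definitional stipulation (echoing the paper's remark after~\eqref{eq:conjugate}, ``If, as is the general case, the operator $\dots$ is invertible''), not a theorem that can be proved without such a convention. Moreover, your supporting remark that the tolerance $\eta_k$ ``can be made arbitrarily small'' does not hold within a fixed iteration: $\eta_k$ is pinned down at line~10 of the previous pass (or by the initialization) and is not re-chosen in step~4, so it cannot be shrunk to exclude the case just described. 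Finally, note that your proof implicitly scopes ``breakdown at $X_k$'' to steps~4--5 and assumes the inner repeat loop (steps~6--9) terminates; that termination is a separate issue, but the paper defers it to Theorem~\ref{thm:linesearch}, so your scoping is consistent with the paper's organization.
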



%

Next, we provide an upper bound for the approximate solution $\widehat{\Delta X}_k$ in Algorithm~1.
\begin{theorem}\label{thm:stepsize}
Let $\Delta{Z}_k\in T_{F(X_k)} (\mathbb{R}^{n\times n})$ be a solution that satisfies condition~\eqref{eq:inm}
and $$\widehat{\Delta X}_k = DF(X_k)^* [\Delta{Z}_k].$$ Then,
\begin{subequations}\label{eq:length0}
\begin{align}
&\mathrm{(a)}\, \| \widehat{\Delta X}_k
\| \leq (1+ {\hat{\eta}_k} )\|DF(X_k)^\dagger\|
\|F(X_k)\|,\label{eq:steplength}\\
&\mathrm{(b)}\,
\|\sigma_k(\eta)\| \leq
 \frac{ 1+\eta_{\mathrm{max}} }{1-{\eta}_{\mathrm{max}}} (1-\eta) \|
DF(X_k)^\dagger
\|_d
 \|F(X_k)\|,\label{eq:length}
\end{align}
\end{subequations}
where ${\hat{\eta}_k}$ is defined in Eq.~\eqref{eq:eta1}, and
$\sigma_k$ is the backtracking curve used in Algorithm~1, which is defined by the following:
\begin{equation*}
\sigma_k(\eta) = \frac{1-\eta}{1-\hat{\eta}_k}  \widehat{\Delta X}_k 
\end{equation*}
with $\hat{\eta}_k\leq \eta \leq 1$,
 and
 \begin{equation*}
 \|DF(X_k)^\dagger\|
  :=  \max\limits_{\|\Delta Z\| = 1}
{\|DF(X_k)^\dagger[\Delta Z]\|}
  \end{equation*}
 represents the norm of the pseudoinverse of $DF(X_k)$.

\end{theorem}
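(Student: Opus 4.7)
The plan is to prove part (a) first by exploiting the fact that $\widehat{\Delta X}_k$ lies in the range of $DF(X_k)^*$, and then to derive part (b) as a straightforward corollary using the definition of the backtracking curve $\sigma_k$.

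For part (a), I would start by rewriting the inexact Newton condition~\eqref{eq:inm} in terms of $\widehat{\Delta X}_k$ rather than $\Delta Z_k$. Setting
\begin{equation*}
r_k := F(X_k) + DF(X_k)[\widehat{\Delta X}_k] = F(X_k) + (DF(X_k)\circ DF(X_k)^*)[\Delta Z_k],
\end{equation*}
we have $\|r_k\| = \hat{\eta}_k\|F(X_k)\|$ by the definition of $\hat{\eta}_k$ in~\eqref{eq:eta1}, and consequently $DF(X_k)[\widehat{\Delta X}_k] = -F(X_k) + r_k$. The key observation is that $\widehat{\Delta X}_k = DF(X_k)^*[\Delta Z_k]$ automatically lies in the range of $DF(X_k)^*$, which is the orthogonal complement of $\ker DF(X_k)$. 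Hence $\widehat{\Delta X}_k$ is the minimum-norm element of the affine space of solutions to the linear equation above, so it coincides with $DF(X_k)^{\dagger}[-F(X_k)+r_k]$. Applying the triangle inequality together with the definition of the operator norm of the pseudoinverse then yields
\begin{equation*}
\|\widehat{\Delta X}_k\| \leq \|DF(X_k)^{\dagger}\|\bigl(\|F(X_k)\|+\|r_k\|\bigr) \leq (1+\hat{\eta}_k)\|DF(X_k)^{\dagger}\|\,\|F(X_k)\|,
\end{equation*}
which is~\eqref{eq:steplength}.

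For part (b), the definition $\sigma_k(\eta)=\frac{1-\eta}{1-\hat{\eta}_k}\widehat{\Delta X}_k$ immediately gives
\begin{equation*}
\|\sigma_k(\eta)\| = \frac{1-\eta}{1-\hat{\eta}_k}\|\widehat{\Delta X}_k\| \leq \frac{(1+\hat{\eta}_k)(1-\eta)}{1-\hat{\eta}_k}\|DF(X_k)^{\dagger}\|\,\|F(X_k)\|.
\end{equation*}
It then suffices to bound the scalar factor $(1+\hat{\eta}_k)/(1-\hat{\eta}_k)$. Since the CG iteration in step~4 of Algorithm~1 guarantees $\hat{\eta}_k\leq \eta_k\leq \eta_{\max}<1$, and the map $x\mapsto (1+x)/(1-x)$ is increasing on $[0,1)$, we conclude $(1+\hat{\eta}_k)/(1-\hat{\eta}_k)\leq (1+\eta_{\max})/(1-\eta_{\max})$, which delivers~\eqref{eq:length}.

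There is no serious obstacle in this argument; it is essentially bookkeeping with the pseudoinverse. The single subtle point that merits explicit verification is the identification of $\widehat{\Delta X}_k$ with the minimum-norm solution of the underdetermined linear system, which hinges on $\widehat{\Delta X}_k\in\operatorname{range}(DF(X_k)^*)$. Once this identification is in place, both inequalities follow by the triangle inequality and elementary monotonicity, so I would devote the bulk of the write-up to clearly stating the minimum-norm characterization and only a brief calculation to the two bounds themselves.
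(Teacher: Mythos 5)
Your proposal is correct and follows essentially the same route as the paper's own proof: both introduce the residual $r_k$, identify $\widehat{\Delta X}_k = DF(X_k)^\dagger[r_k - F(X_k)]$ (the paper writes the pseudoinverse as $DF(X_k)^* \circ [DF(X_k)\circ DF(X_k)^*]^{-1}$, which is the same operator you characterize via the minimum-norm property), apply the triangle inequality, and then bound the ratio $(1+\hat{\eta}_k)/(1-\hat{\eta}_k)$ by $(1+\eta_{\max})/(1-\eta_{\max})$ using $\hat{\eta}_k \leq \eta_{\max}$. Your explicit justification of the minimum-norm identification is a welcome clarification of a step the paper leaves implicit, but it is not a different argument.
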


\begin{proof}
Let $r_k = (DF(X_k)\circ DF(X_k)^*) [\Delta {Z}_k] + F(X_k)$.  We see that
\begin{eqnarray*}
\| \widehat{\Delta X}_k
\| &\leq& \|DF(X_k)^* \circ [DF(X_k)\circ DF(X_k)^*]^{-1}\|
\|r_k - F(X_k)\|\\
&\leq& (1+ \hat{\eta}_k)\|DF(X_k)^\dagger\|
\|F(X_k)\|
\end{eqnarray*}
and
\begin{eqnarray*}
\|\sigma_k(\eta)\| & =  &  \frac{1-\eta}{1-\hat{\eta}_k}
\|
DF(X_k)^\dagger  (r_k-F(X_k))
\| \leq  \frac{ 1+\hat{\eta}_k }{1-\hat{\eta}_k} (1-\eta)
\|
DF(X_k)^\dagger
\|
\|
 F(X_k)
\|
 \\
&\leq&
  \frac{ 1+\eta_{\mathrm{max}} }{1-{\eta}_{\mathrm{max}}} (1-\eta) \|
DF(X_k)^\dagger
\|
\|
 F(X_k)
\|. 
\end{eqnarray*} \qed
\end{proof}

{In our subsequent discussion, we assume that Algorithm~1 does not break down and there is a unique limit point $X_*$ of $\{X_k\}$}.
Since $F$ is continuously differentiable, we have the following:
\begin{equation}\label{eq:dagger}
\|DF(X)^\dagger \| \leq 2 \|DF(X_*)^\dagger \|
\end{equation}
whenever $X \in B_\delta (X_*)$ for a sufficiently small constant $\delta > 0$. Here, the notation $B_\delta (X_*)$ represents a neighborhood of $X_*$ consisting of all points $X$ such that $
\|X - X_*\|
 < \delta$.
By condition~\eqref{eq:length0}, we can show without any difficulty that whenever $X_k$ is sufficiently close to $X_*$,
\begin{eqnarray}
 \| \widehat{\Delta X}_k
\| &\leq&(1+\eta_{\mathrm{max}} )\|DF(X_*)^\dagger\|
\|{F(X_k)}\|,\label{eq:steplengthnew}\\
\|\sigma_k(\eta)\| &\leq&
\Gamma (1-\eta)
 \|{F(X_k)}\|, \quad \hat{\eta}_k\leq \eta \leq 1, \nonumber
\end{eqnarray}
%
where $\Gamma$ is a constant independent of $k$ defined by
\begin{equation*}
\Gamma =  2\frac{ 1+\eta_{\mathrm{max}} }{1-{\eta}_{
\mathrm{
max}}}  \|
DF(X_*)^\dagger
\|.
\end{equation*}

New, we show that the sequence of $\{F(X_k)\}$ eventually converges to zero.

%
\begin{theorem}\label{lem:cov}
Assume that Algorithm~1 does not break down.
If $\{X_k\}$  is the sequence generated in Algorithm~1,
%
then
\begin{equation*}
\lim_{k\rightarrow \infty} F(X_k) = \mathbf{0}.
\end{equation*}

\end{theorem}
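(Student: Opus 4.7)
The plan is a proof by contradiction. The backtracking acceptance rule~\eqref{eq:line} forces $\{\|F(X_k)\|\}$ to be strictly decreasing, so it converges to some limit $F_{*}\geq 0$. Suppose, for contradiction, that $F_{*}>0$. By the standing hypothesis $X_k\to X_{*}$ and the continuity of $F$, this implies $\|F(X_{*})\|=F_{*}>0$, and for all sufficiently large $k$ the bounds~\eqref{eq:dagger},~\eqref{eq:steplengthnew}, and~\eqref{eq:length} hold with constants that are uniform in $k$.

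The core estimate is a second-order expansion of the residual along the backtracking curve. Since $R$ is a smooth retraction with $R_{X_k}(0)=X_k$ and $DR_{X_k}(0)=\mathrm{Id}$, and $F$ is continuously differentiable, there exist a constant $C>0$ and a neighborhood of $X_{*}$ on which
\begin{equation*}
\|F(R_{X_k}(\sigma_k(\eta)))-F(X_k)-DF(X_k)[\sigma_k(\eta)]\|\leq C\|\sigma_k(\eta)\|^2.
\end{equation*}
Because $\widehat{\Delta X}_k$ realizes the inexact-Newton residual with ratio $\hat{\eta}_k$ and $\sigma_k(\eta)=\tfrac{1-\eta}{1-\hat{\eta}_k}\widehat{\Delta X}_k$, a direct linear computation gives
\begin{equation*}
\|F(X_k)+DF(X_k)[\sigma_k(\eta)]\|\leq \eta\|F(X_k)\|,\qquad \hat{\eta}_k\leq \eta\leq 1.
\end{equation*}
Combining these with the step bound~\eqref{eq:length} yields
\begin{equation*}
\|F(R_{X_k}(\sigma_k(\eta)))\|\leq \eta\|F(X_k)\|+C\Gamma^2(1-\eta)^2\|F(X_k)\|^2,
\end{equation*}
so the acceptance inequality~\eqref{eq:line} is guaranteed whenever $1-\eta<\delta_k:=(1-t)/\bigl(C\Gamma^2\|F(X_k)\|\bigr)$.

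Now I track $\eta_k$ through the inner loop. Each update~\eqref{eq:update} replaces $1-\eta_k$ by $\theta(1-\eta_k)$ with $\theta\in[\theta_{\min},\theta_{\max}]$. Either acceptance occurs at the first backtracking pass, in which case $1-\eta_k\geq \theta_{\min}(1-\hat{\eta}_k)\geq \theta_{\min}(1-\eta_{\max})$, or it occurs at a later pass; in the latter situation the preceding pass failed acceptance and therefore satisfied $1-\eta_k^{\mathrm{prev}}\geq \delta_k$, so the accepted value obeys $1-\eta_k\geq \theta_{\min}\delta_k$. Under the contradiction hypothesis $F_{*}>0$, the quantity $\|F(X_k)\|$ lies in the bounded interval $[F_{*},\|F(X_0)\|]$, whence $\delta_k\geq \delta_{*}>0$ uniformly in $k$. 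This gives the uniform lower bound $1-\eta_k\geq \gamma:=\theta_{\min}\min\{1-\eta_{\max},\delta_{*}\}>0$ for all sufficiently large $k$, and substituting into~\eqref{eq:line} yields $\|F(X_{k+1})\|\leq (1-t\gamma)\|F(X_k)\|$. Iterating forces $\|F(X_k)\|\to 0$, contradicting $F_{*}>0$, and therefore $F_{*}=0$.

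The principal technical hurdle is securing the quadratic Taylor remainder $C$ uniformly in $k$: this rests on the smoothness of the retraction $R$ on $\mathcal{O}(n)\times\mathcal{O}(n)\times\mathcal{W}(n)$ together with the continuous differentiability of $F$, applied on a fixed compact neighborhood of $X_{*}$ that the iterates eventually enter. Once this uniformity and the bounds~\eqref{eq:steplengthnew}--\eqref{eq:length} are in hand, the backtracking bookkeeping is routine.
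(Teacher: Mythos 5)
Your proof is correct and, importantly, it supplies an argument that the paper's own proof skips. The paper telescopes the acceptance inequality~\eqref{eq:line} to obtain $\|F(X_k)\| \leq \|F(X_0)\|\,e^{-t\sum_{j=0}^{k-1}(1-\eta_j)}$ and then asserts $\sum_{j}(1-\eta_j)=\infty$ without proof; but $\eta_j$ here is the backtracked parameter after the inner loop, which is not capped by $\eta_{\mathrm{max}}$ and could \emph{a priori} tend to $1$ so quickly that the series converges. Your contradiction argument is precisely the Eisenstat--Walker device that closes this gap: if $\|F(X_k)\|$ decreased monotonically to some $F_*>0$, then a second-order Taylor remainder for $F\circ R_{X_k}$ together with the step bound~\eqref{eq:length} would guarantee acceptance whenever $1-\eta<\delta_k$, with $\delta_k$ bounded below by a $k$-independent $\delta_*>0$; tracing the inner loop then yields $1-\eta_k\geq\theta_{\mathrm{min}}\min\{1-\eta_{\mathrm{max}},\delta_*\}>0$ for all large $k$, which forces geometric decay and contradicts $F_*>0$. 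Two remarks worth recording. (i) Your estimate leans on the standing hypothesis $X_k\to X_*$ to obtain a uniform remainder constant $C$ and to invoke~\eqref{eq:dagger}--\eqref{eq:length}; since the paper later uses this theorem inside Theorem~\ref{thm:glbcov} to conclude $X_k\to X_*$, one should check that no circular dependence is created, but the paper's own estimates such as~\eqref{eq:dagger} already assume the iterates lie in $B_\delta(X_*)$, so you are working under the same premises the paper already invokes. (ii) The quadratic remainder $C\|\sigma_k(\eta)\|^2$ requires second-order (Lipschitz-$C^1$) smoothness of $F\circ R$, slightly stronger than the first-order $o(\|\Delta X\|)$ bound the paper uses in Theorem~\ref{thm:linesearch}; this is harmless here because $F$ is polynomial and the $qf$-retraction is smooth on the compact neighborhood the iterates eventually enter.
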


\begin{proof}
Observe that
\begin{eqnarray*}
\|F(X_k)\| &= & \|F(R_{X_{k-1}}(\Delta X_{k-1}))\|\leq
(1-t(1-\eta_{k-1})) \|F(X_{k-1})\|\\
 &\leq&
\|F(X_0)\| \prod\limits_{j=0}^{k-1}(1-t(1-\eta_j))
\leq  \|F(X_0)\| e^{-t\sum\limits_{j=0}^{k-1}(1-\eta_j)}.
\end{eqnarray*}
Since $t>0$ and $\lim\limits_{k\rightarrow \infty}\sum\limits_{j=0}^{k-1}(1-\eta_j) = \infty$, we have $\lim\limits_{k\rightarrow \infty} F(X_k) = \mathbf{0}$.
\qed%
\end{proof}

In our iteration, we implement the repeat loop among steps {6 to 9} by selecting a sequence $\{\theta_j\}$, with $\theta_j \in[\theta_{\mathrm{min}},\theta_{\mathrm{max}}]$. For each loop, correspondingly, we let $\eta_k^{(1)} = \hat{\eta}_k$ and $\Delta X^{(1)}
= \widehat{\Delta X}_k
$, and for $j = 2,\ldots,$ we let
\begin{eqnarray}
\eta_k^{(j)} &=& 1-\theta_{j-1}(1-\eta_k^{(j-1)}),
\nonumber\\
\Delta X_k^{(j)} &=& \frac{1-\eta_k^{(j)}}{1-
\hat{\eta}_k}\widehat{\Delta X}_k. \label{eq:Deltakj}
\end{eqnarray}
By induction, then, we can easily show that:
\begin{eqnarray*}
\Delta X_k^{(j)} &=&
\Theta_{j-1} \widehat{\Delta X}_k,
\quad 1-\eta_k^{(j)} =
\Theta_{j-1}  (1-\hat{\eta}_k),
\end{eqnarray*}
where
\begin{equation}\label{Theta}
\Theta_{j-1} = \prod\limits_{\ell=1}^{j-1} \theta_\ell,   \quad j \geq 2.
\end{equation}
That is, the sequence $\{\Delta X_k^{(j)} \}_j$ is a strictly decreasing sequence satisfying
$\lim\limits_{j\rightarrow \infty} \Delta X_k^{(j)} = \mathbf{0}$,
and $\{\eta_{k}^{(j)}\}_j$ is a sequence satisfying
$\eta_{k}^{(j)} \geq \hat{\eta}_k$ for $j\geq 1$,
and $\lim\limits_{j\rightarrow \infty} \eta_{k}^{(j)} = 1$. Based on these observations, next, we show that the repeat loop terminates after a finite number of steps.
\begin{theorem}~\label{thm:linesearch}
Let $\{\widehat{\Delta X}_k\}$ be the sequence generated from Algorithm~1, i.e.,
\begin{equation*}
\|(DF(X_k) [\widehat{\Delta X}_k] + F(X_k)\| \leq \eta_k \|F(X_k)\|.
\end{equation*}
Then, once $j$ is large enough, the sequence  $\{\eta_{k}^{(j)}\}_j$ satisfies the following:
 \begin{eqnarray}
 \|F(X_k) + DF(X_k) [\Delta{X}_k^{(j)}]\|
& \leq &
\eta_k^{(j)} \|F(X_k)\|,\nonumber\\
 \|F(R_{X_{k}}(
 \Delta X_k^{(j)}
 ) )\|
 &\leq&
 (1-t(1-\eta_k^{(j)}))\|F(X_k)\|. \label{eq:tem2}
 \end{eqnarray}

%
%
%


\end{theorem}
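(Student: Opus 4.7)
The plan is to verify the two inequalities of~\eqref{eq:tem2} in turn, exploiting the explicit relations $\Delta X_k^{(j)} = \Theta_{j-1}\widehat{\Delta X}_k$ and $1-\eta_k^{(j)} = \Theta_{j-1}(1-\hat{\eta}_k)$ recorded just above the statement. For the first inequality, I would rewrite the linear model at $\Delta X_k^{(j)}$ as the convex combination
\begin{equation*}
F(X_k) + DF(X_k)[\Delta X_k^{(j)}] = (1-\Theta_{j-1})F(X_k) + \Theta_{j-1}\bigl(F(X_k) + DF(X_k)[\widehat{\Delta X}_k]\bigr),
\end{equation*}
so that the triangle inequality together with the inexact Newton bound $\|F(X_k)+DF(X_k)[\widehat{\Delta X}_k]\|\leq \hat{\eta}_k\|F(X_k)\|$ yields $\|F(X_k)+DF(X_k)[\Delta X_k^{(j)}]\| \leq (1-\Theta_{j-1}(1-\hat{\eta}_k))\|F(X_k)\| = \eta_k^{(j)}\|F(X_k)\|$. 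This bound in fact holds for every $j\geq 1$, not only for large $j$.

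The second inequality is the heart of the theorem; it says that the inner backtracking loop of Algorithm~1 terminates after finitely many steps. Since $F$ is continuously differentiable on the ambient Euclidean space and $R_{X_k}$ is a smooth retraction with $R_{X_k}(\mathbf{0})=X_k$ and $\mathrm{D}R_{X_k}(\mathbf{0}) = \mathrm{Id}$, a standard second-order Taylor expansion of $F\circ R_{X_k}$ around the origin furnishes constants $C_k>0$ and $\rho_k>0$ such that
\begin{equation*}
\|F(R_{X_k}(\xi)) - F(X_k) - DF(X_k)[\xi]\| \leq C_k \|\xi\|^2 \qquad \text{whenever } \|\xi\|\leq \rho_k.
\end{equation*}
Combined with the first inequality, this gives $\|F(R_{X_k}(\Delta X_k^{(j)}))\| \leq \eta_k^{(j)}\|F(X_k)\| + C_k\|\Delta X_k^{(j)}\|^2$, so the target estimate reduces to
\begin{equation*}
C_k\|\Delta X_k^{(j)}\|^2 \leq (1-t)(1-\eta_k^{(j)})\|F(X_k)\|.
\end{equation*}
Substituting the relations $\|\Delta X_k^{(j)}\| = \Theta_{j-1}\|\widehat{\Delta X}_k\|$ and $1-\eta_k^{(j)} = \Theta_{j-1}(1-\hat{\eta}_k)$ reveals that the left-hand side scales like $\Theta_{j-1}^{\,2}$ while the right-hand side scales like $\Theta_{j-1}$. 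Because $\theta_\ell\leq \theta_{\max}<1$, we have $\Theta_{j-1}\leq \theta_{\max}^{\,j-1}\to 0$, so the quadratic term is eventually dominated by the linear slack, and the inequality holds for all sufficiently large $j$. We use here that $\|F(X_k)\|>0$, which is guaranteed as otherwise Algorithm~1 would have already terminated.

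The main obstacle I anticipate is producing the $O(\|\xi\|^2)$ remainder for $F\circ R_{X_k}$ with a constant that can be used \emph{a priori} inside the line-search estimate, and in particular verifying that all the tentative steps $\Delta X_k^{(j)}$ lie inside the ball on which this remainder bound is valid. The latter condition is automatic because the sequence $\{\Delta X_k^{(j)}\}_j$ decreases to $\mathbf{0}$, so that for $j$ beyond some threshold $\|\Delta X_k^{(j)}\|\leq \rho_k$; the former is immediate from the smoothness of both $F$ and the retraction $R$ on the product manifold $\mathcal{O}(n)\times\mathcal{O}(n)\times\mathcal{W}(n)$.
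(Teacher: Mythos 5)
Your proof is correct, and it differs from the paper's in the way it controls the remainder term $F(R_{X_k}(\xi)) - F(X_k) - DF(X_k)[\xi]$. The paper proves the first inequality via exactly the same convex-combination decomposition that you use (just written with the weights $\frac{\eta_k^{(j)}-\hat{\eta}_k}{1-\hat{\eta}_k}$ and $\frac{1-\eta_k^{(j)}}{1-\hat{\eta}_k}$ instead of $1-\Theta_{j-1}$ and $\Theta_{j-1}$), so that part of your argument is essentially identical. The divergence comes in the second inequality: you invoke a second-order Taylor bound $\|F(R_{X_k}(\xi))-F(X_k)-DF(X_k)[\xi]\| \leq C_k\|\xi\|^2$, which requires $F\circ R_{X_k}$ to be at least $C^{1,1}$ near the origin, and then observe that the quadratic term $C_k\Theta_{j-1}^2\|\widehat{\Delta X}_k\|^2$ is eventually dominated by the linear slack $(1-t)\Theta_{j-1}(1-\hat{\eta}_k)\|F(X_k)\|$ because $\Theta_{j-1}\to 0$. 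The paper instead works only with continuous differentiability: it \emph{fixes} the tolerance $\epsilon_k = \frac{(1-t)(1-\hat{\eta}_k)\|F(X_k)\|}{\|\widehat{\Delta X}_k\|}$ in advance, invokes the definition of the derivative to get a $\delta>0$ with a purely \emph{linear} remainder bound $\epsilon_k\|\xi\|$ on the $\delta$-ball, and then checks that $\Theta_{j-1}$ small forces $\|\Delta X_k^{(j)}\|\leq\delta$. With that carefully tuned $\epsilon_k$, the final estimate lands exactly on $(1-t(1-\eta_k^{(j)}))\|F(X_k)\|$. Your route buys a cleaner ``quadratic beats linear'' heuristic; the paper's route buys a weaker smoothness hypothesis ($C^1$ suffices for this theorem, with $C^2$ deferred until the quadratic-convergence argument). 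Both are sound, and your remark that the first inequality holds for every $j\geq 1$ is accurate.
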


\begin{proof}
Let $\hat{\eta}_k$ be defined in Eq.~\eqref{eq:eta1}
with $\Delta X_k = \widehat{\Delta X}_k$,
and
$\epsilon_k = \frac{(1-t)(1-\hat{\eta}_k)\|F(X_k)\|}{\|\widehat{\Delta X}_k\|}$. Since $F$ is continuously differentiable, for $\epsilon_k > 0$, there exists a sufficiently small $\delta > 0$ such that $\|\Delta X\| < \delta$ implies that:
\begin{equation*}
\|
F(R_{X_{k}}(\Delta X) ) - F(R_{X_{k}}(\mathbf{0}_{X_{k}}))
-D F(R_{X_{k}}(\mathbf{0}_{X_{k}})) [\Delta X]
\| \leq \epsilon_k \|\Delta X\|,
\end{equation*}
where $\mathbf{0}_{X_{k}}$ is the origin of $T_{X_{k}}\left(\mathcal{O}(n) \times \mathcal{O}(n) \times \mathcal{W}(n)\right)$.

For $\delta > 0$, we let
\begin{equation*}
\eta_{\mathrm{min}} = \mathrm{max} \left \{\hat{\eta}_k, 1-\frac{(1-\hat{\eta}_k)\delta}{\|\widehat{\Delta X}_k\|}\right \}.
\end{equation*}
Note that once $j$ is sufficiently large,
\begin{equation}\label{eq:cond}
 \eta_k^{(j)} - \eta_{\mathrm{min}}
\geq\left (\frac{\delta}{\|\widehat{\Delta X}_k\|}
 -\Theta_{j-1}
\right )(1-\hat{\eta}_k)  {\geq} 0.
\end{equation}
For sufficiently large $j$, we consider the sequence $\{\Delta X_k^{(j)}\}_j$ in Eq.~\eqref{eq:Deltakj} with
 $\eta_k^{(j)} \in [\eta_{\mathrm{min}},1)$. We can see that:
 \begin{equation*}
 \|\Delta X_k^{(j)} \| =
 \|
 \frac{1- \eta_k^{(j)}}{1- \hat{\eta}_k}\widehat{\Delta X}_k
 \|\leq \frac{1-\eta_{\mathrm{min}}}{1- \hat{\eta}_k}\|\widehat{\Delta X}_k\| \leq \delta.
 \end{equation*}
 This implies that:
 \begin{eqnarray*}
 \|F(X_k) + DF(X_k) [\Delta{X}_k^{(j)}]\| &\leq&
 \left \|
 F(X_k) + DF(X_k) \left(\frac{1- \eta_k^{(j)}}{1- \hat{\eta}_k}\widehat{\Delta X}_k\right)
 \right \|\\
&\leq&
 \left \|
 \frac{ \eta_k^{(j)} -  \hat{\eta}_k}{1- \hat{\eta}_k}
 F(X_k) +
 \frac{1- \eta_k^{(j)}}{1- \hat{\eta}_k} \left(
 DF(X_k) [\widehat{\Delta X}_k] + F(X_k)\right)
 \right\|\\
&\leq&
\frac{ \eta_k^{(j)} -  \hat{\eta}_k}{1- \hat{\eta}_k}
\|
 F(X_k)\| +
 \frac{1- \eta_k^{(j)}}{1- \hat{\eta}_k}
 \hat{\eta}_k \|F(X_k)\|\\
& = &
\eta_k^{(j)} \|F(X_k)\|,
 \end{eqnarray*}
 and
 \begin{eqnarray*}
 F(R_{X_{k}}(
 \Delta X_k^{(j)}
 ) )\|
 &=&
 \|
F(R_{X_{k}}(
 \Delta X_k^{(j)}) - F(R_{X_{k}}(\mathbf{0}_{X_{k}}))
-D F(R_{X_{k}}(\mathbf{0}_{X_{k}})) [
 \Delta X_k^{(j)}]
\|\\
&&
+ \|F(X_k) + DF(X_k)
 [\Delta X_k^{(j)}]\|\\
 &=&
\epsilon_k \|
 \Delta X_k^{(j)}\| + \eta_k^{(j)}\|F(X_k)\|\\
 &=&
 \frac{(1-t)(1-\hat{\eta}_k)\|F(X_k)\|}{\|\widehat{\Delta X}_k\|}
\left\|  \frac{1- \eta_k^{(j)}}{1- \hat{\eta}_k}\widehat{\Delta X}_k \right\| + \eta_k^{(j)}\|F(X_k)\|\\
 &=&
 (1-t(1-\eta_k^{(j)}))\|F(X_k)\|.
 \end{eqnarray*}
\qed  \end{proof}

%
%
%
%
%
%
%
%
%
%
%
%
%
From the proof of Theorem~\ref{thm:linesearch}, we can see that
for each $k$, the repeat loop for the backtracking line search will terminate in a finite number of steps once condition~\eqref{eq:cond} is satisfied.
{Moreover, Theorem~\ref{lem:cov} and condition~\eqref{eq:steplengthnew} imply the following:
 \begin{equation*}
\lim\limits_{k\rightarrow \infty} \|\widehat{\Delta X}_k\| = \mathbf{0}.
\end{equation*}
That is, if $k$ is sufficient large, i.e., $\|\widehat{\Delta X}_k\|$ is small enough, then from the proof of Theorem~\ref{thm:linesearch}
we see that condition~\eqref{eq:line} is always satisfied, i.e., $\eta_k =\hat{\eta}_k$ for all sufficient large $k$. }

To show that Algorithm~1 is a globally convergent algorithm, we have one additional requirement for the retraction $R_X$, {i.e.}, there exist $\nu>0$ and $\delta_{\nu} > 0$ such that:
\begin{equation}\label{eq:retraction}
\nu\|\Delta X\| \geq  \mbox{dist}(R_X(\Delta X),X),
\end{equation}
for all $X \in \mathcal{O}(n) \times \mathcal{O}(n) \times \mathcal{W}(n)$ and for all
$\Delta X \in T_X\left (\mathcal{O}(n) \times \mathcal{O}(n) \times \mathcal{W}(n)\right)$ with
$\|\Delta X\|  \leq \delta_{\nu}$~\cite{Absil2008}. Here ``$\mbox{dist}(\cdot,\cdot)$" represents the Riemannian distance on  $ \mathcal{O}(n) \times \mathcal{O}(n) \times \mathcal{W}(n)$. Under this assumption, our next theorem shows the global convergence property of Algorithm~1.
We have borrowed the strategy for this proof from that used in~\cite[Theorem 3.5]{Eisenstat1994} to prove the nonlinear matrix equation. 

%

\begin{theorem}\label{thm:glbcov}
Assume that Algorithm~1 does not break down.
Let $X_*$ be a limit point of $\{X_k\}$. Then
$X_k$ converges to $X_*$ and $F(X_*) = \mathbf{0}$. Moreover, $X_k$ converges to $X_*$ quadratically whenever $X_k$ is sufficiently close to $X_*$.
\end{theorem}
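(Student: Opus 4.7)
The proof decomposes into three pieces: $F(X_*) = \mathbf{0}$, full-sequence convergence $X_k \to X_*$, and the quadratic rate once we are near $X_*$. The first piece is immediate: Theorem~\ref{lem:cov} gives $F(X_k)\to \mathbf{0}$, and since $F$ is continuous, any subsequence $X_{k_j}\to X_*$ yields $F(X_*) = \mathbf{0}$.

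For full-sequence convergence, the plan is to show that once one iterate enters a small enough ball around $X_*$, all subsequent iterates remain in a slightly larger ball and in fact form a Cauchy sequence in the Riemannian distance. I would fix $\delta>0$ small enough so that~\eqref{eq:dagger} holds on $B_\delta(X_*)$ and so that the retraction bound~\eqref{eq:retraction} applies. Since $X_*$ is a limit point, pick $k_0$ with $X_{k_0}$ arbitrarily close to $X_*$. Using the monotone decrease $\|F(X_{k+1})\|\leq (1-t(1-\eta_{\max}))\|F(X_k)\|$ from the line-search condition~\eqref{eq:line}, combined with~\eqref{eq:steplengthnew} and~\eqref{eq:retraction}, one gets
\begin{equation*}
\mathrm{dist}(X_{k+1},X_k)\;\leq\;\nu\|\Delta X_k\|\;\leq\;\nu(1+\eta_{\max})\|DF(X_*)^\dagger\|\,\|F(X_k)\|
\end{equation*}
for all $k\geq k_0$, so the telescoped distance $\sum_{k\geq k_0}\mathrm{dist}(X_{k+1},X_k)$ is dominated by a geometric series in $\|F(X_{k_0})\|$. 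Choosing $\|F(X_{k_0})\|$ sufficiently small keeps the whole tail inside $B_\delta(X_*)$ and forces the sequence to be Cauchy, hence convergent; the limit must agree with the limit point $X_*$.

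For the quadratic rate, the first observation is that once $X_k$ is close enough to $X_*$, the backtracking loop terminates without any correction, so $\eta_k=\hat\eta_k$ and $\Delta X_k=\widehat{\Delta X}_k$. This was essentially observed in the comment following Theorem~\ref{thm:linesearch}, but I would re-derive it cleanly by checking that with $\widehat{\Delta X}_k$ satisfying $\|DF(X_k)[\widehat{\Delta X}_k]+F(X_k)\|\leq \hat\eta_k\|F(X_k)\|$ and $\|\widehat{\Delta X}_k\|=O(\|F(X_k)\|)$ by~\eqref{eq:steplengthnew}, a Taylor expansion of $F\circ R_{X_k}$ along the tangent vector $\widehat{\Delta X}_k$ produces
\begin{equation*}
\|F(R_{X_k}(\widehat{\Delta X}_k))\|\;\leq\;\hat\eta_k\|F(X_k)\|+C\|\widehat{\Delta X}_k\|^2,
\end{equation*}
which for small $\|F(X_k)\|$ is strictly below $(1-t(1-\hat\eta_k))\|F(X_k)\|$, so~\eqref{eq:line} holds on the first trial.

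With $\eta_k=\hat\eta_k$ secured, the quadratic rate follows from the forcing-term update in step~10, which enforces $\eta_{k+1}\leq \|F(X_{k+1})\|$, hence $\eta_k = O(\|F(X_k)\|)$ eventually. Plugging this into the Taylor estimate above gives $\|F(X_{k+1})\|\leq C'\|F(X_k)\|^2$, i.e., $\|F(X_k)\|$ is quadratically convergent to zero. Combining with the retraction bound~\eqref{eq:retraction} and the triangle inequality
\begin{equation*}
\mathrm{dist}(X_{k+1},X_*)\;\leq\;\mathrm{dist}(X_{k+1},X_k)+\mathrm{dist}(X_k,X_*)\;\leq\;\nu\|\Delta X_k\|+\mathrm{dist}(X_k,X_*),
\end{equation*}
together with the local Lipschitz equivalence between $\mathrm{dist}(X,X_*)$ and $\|F(X)\|$ coming from surjectivity of $DF(X_*)$ (already used implicitly in~\eqref{eq:dagger}), transfers the quadratic convergence of $\|F(X_k)\|$ into quadratic convergence of $\mathrm{dist}(X_k,X_*)$.

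The step I expect to be the main obstacle is the full-sequence convergence argument, because having only a limit point (not a unique one a priori) forces a delicate choice of the entry time $k_0$: the summable-displacement bound must simultaneously beat the $\delta$ that defines the good neighborhood in which~\eqref{eq:dagger},~\eqref{eq:steplengthnew}, and~\eqref{eq:retraction} are all valid, and the estimate depends on constants (notably $\|DF(X_*)^\dagger\|$) tied to that neighborhood. Threading the inequalities so that one ball works for every step is the technical heart of the argument; the Taylor expansion and the Eisenstat–Walker-style forcing-term bookkeeping are comparatively routine once the iterates are known to stay put.
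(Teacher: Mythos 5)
Your decomposition matches the paper's, and the first two pieces are essentially right: $F(X_*)=\mathbf 0$ follows from Theorem~\ref{lem:cov} and continuity, and the full-sequence convergence is proved from the displacement bound $\mathrm{dist}(X_{k+1},X_k)\leq\nu\|\Delta X_k\| = O(\|F(X_k)\|)$ plus the geometric decrease of $\|F(X_k)\|$. (The paper closes the ``stay in the ball'' issue you flag by running a proof by contradiction — extract $X_{k_j}\to X_*$, assume the orbit escapes a $\delta$-ball, telescope the exit, and use the line-search decrease to bound the telescoped sum by $\tfrac{\nu\Gamma_{m_k}}{t}(\|F(X_{k_j})\|-\|F(X_{k_{j+1}})\|)\to 0$ — rather than the direct Cauchy induction you sketch, but the content is the same.)

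The quadratic-rate step is where your argument has a genuine gap. You correctly obtain $\|F(X_{k+1})\|\leq C'\|F(X_k)\|^2$ from the forcing-term update $\eta_{k+1}\leq\|F(X_{k+1})\|$ and the Taylor estimate, but to transfer this to $\mathrm{dist}(X_{k+1},X_*)=O(\mathrm{dist}(X_k,X_*)^2)$ you invoke a ``local Lipschitz equivalence between $\mathrm{dist}(X,X_*)$ and $\|F(X)\|$.'' The upper bound $\|F(X)\|\leq C_2\,\mathrm{dist}(X,X_*)$ is fine, but the lower bound $\|F(X)\|\geq c\,\mathrm{dist}(X,X_*)$ is false here: $F$ maps a manifold of dimension $\tfrac{3n(n-1)}{2}-k$ into $\mathbb R^{n\times n}$, so for $n>3$ (where $DF(X_*)$ is surjective and onto but far from injective) $F^{-1}(\mathbf 0)$ is a positive-dimensional submanifold and there are points $X$ arbitrarily close to $X_*$ with $F(X)=\mathbf 0$ but $\mathrm{dist}(X,X_*)>0$. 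Surjectivity of $DF(X_*)$ — which is all that~\eqref{eq:dagger} uses — does not give a two-sided error bound to a single root, only to the zero set. Moreover the triangle inequality you wrote, $\mathrm{dist}(X_{k+1},X_*)\leq\nu\|\Delta X_k\|+\mathrm{dist}(X_k,X_*)$, cannot yield a superlinear rate since $\mathrm{dist}(X_k,X_*)$ already appears with coefficient $1$ on the right. The paper's route around this is to bound the distance to $X_*$ by the total remaining travel,
\begin{equation*}
\mathrm{dist}(X_{k+1},X_*)\;\leq\;\sum_{s\geq k+1}\mathrm{dist}(X_{s},X_{s+1})\;\leq\;\nu\sum_{s\geq k+1}\|\Delta X_s\|\;\leq\;\nu\Gamma(1+\eta_{\max})\sum_{j\geq 0}\big(1-t(1-\eta_{\max})\big)^j\|F(X_{k+1})\|,
\end{equation*}
and then to use only the one-sided Lipschitz bound $\|F(X_{k+1})\|\leq(C_1\Gamma^2 C_2^2+C_2^2)\,\mathrm{dist}(X_k,X_*)^2$. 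That tail-sum argument, not a two-sided equivalence, is the missing ingredient in your write-up.
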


\begin{proof}
Suppose $X_k$ does not converge to $X_*$. This implies that there exist two sequences of numbers $\{k_j\}$ and $\{\ell_j\}$ for which:
\begin{align*}
& X_{k_j} \in N_{{\delta}/{j}} (X_*), \\
& X_{k_j+\ell_j} \not\in N_{{\delta}} (X_*), \\
& X_{k_j + i} \in N_{\delta} (X_*), \mbox{ if } i = 1,\ldots, \ell_{j-1}\\
& k_j+\ell_j \leq k_{j+1}.
\end{align*}

From Theorem~\ref{thm:linesearch}, we see that
the repeat loop among {steps 6 to 9} of Algorithm~1 terminates in finite steps. For each $k$, let $m_k$ be the smallest number such that condition~\eqref{eq:tem2} is satisfied, i.e.,
$
\Delta X_k  = \Theta_{m_k} \widehat{\Delta X}_k $
{and
$\eta_k = 1- \Theta_{m_k}(1-\hat{\eta}_k)$}
with $\Theta_{m_k}$ being defined in Eq.~\eqref{Theta}. It follows from condition~\eqref{eq:length} that:
\begin{equation}\label{eq:DeltaXk2}
\|\Delta X_k\|
\leq
{ 2 \Theta_{m_k}
 \left (
 \frac{ 1+\eta_{\mathrm{max}} }{1-{\eta}_{\mathrm{max}}}
 \right )}
 (1-{\eta_{k}}) \|
DF(X_*)^\dagger
\|
 \|F(X_k)\|,
\end{equation}
for a sufficiently small $\delta$ and $X_k \in B_\delta (X_*)$, so that condition~\eqref{eq:dagger} is satisfied.  Let
\begin{equation*}
\Gamma_{m_k} =  {2 \Theta_{m_k} \left (
\frac{ 1+\eta_{\mathrm{max}} }{1-{\eta}_{\mathrm{max}}}
\right)}
 \|
DF(X_*)^\dagger
\|.
\end{equation*}
According to condition~\eqref{eq:retraction}, there exist
 $\nu>0$ and $\delta_{\nu} > 0$ such that:
\begin{equation*}
\nu\|\Delta X\| \geq  \mbox{dist}\left(R_X(\Delta X), X\right),
\end{equation*}
when $\|\Delta X\| \leq {\delta_{\nu}}$. Since $F(X_k)$ approaches  zero as $k$ approaches infinity,  for $\delta_{\nu}$, condition~\eqref{eq:DeltaXk2} implies that there exists a sufficiently large $k$ such that:
\begin{equation}\label{eq:DeltaXk3}
\nu\|\Delta X_k\| \geq  \mbox{dist}\left(R_{X_k}(\Delta X_k),X_k\right)
\end{equation}
is satisfied whenever $\|\Delta X_k\| \leq \delta_{\nu}$.

Then for a sufficiently large $j$, we can see from conditions~\eqref{eq:DeltaXk2} and \eqref{eq:DeltaXk3} that:
\begin{eqnarray*}
\frac{\delta}{2} &\leq & \mbox{dist}(X_{k_j+\ell_j},X_{k_j})
\leq \sum_{k=k_j}^{k_j+\ell_j-1}
\mbox{dist}(X_{k+1},X_{k})\\
& = & \sum_{k=k_j}^{k_j+\ell_j-1}
\mbox{dist}(R_{X_k}(\Delta X_{k}),X_{k})
\leq
\sum_{k=k_j}^{k_j+\ell_j-1}
\nu \|\Delta X_k\|\\
&\leq &
\sum_{k=k_j}^{k_j+\ell_j-1}
\nu \Gamma_{m_k}(1-\eta_k) \|F(X_k)\|
\leq
\sum_{k=k_j}^{k_j+\ell_j-1}
\frac{\nu \Gamma_{m_k}}{t}\left(
\|F(X_k)\| - \|F(X_{k+1})\|
\right)\\
&\leq& \frac{\nu \Gamma_{m_k}}{t}
\left ( \|
F(X_{k_j})\| - \|F(X_{k_{j + 1}})
\| \right).
\end{eqnarray*}
This is a contraction, since Theorem~\ref{lem:cov} implies that $F(X_{k_j})$ converges to zero as $j$ approaches infinity and $\Gamma_{m_k}$ is bounded. Thus,  $X_k$ converges to $X_*$, and immediately, we have $F(X_*) = \mathbf{0}$.   This completes the proof of the first part.

To show that  $X_k$ converges to $X_*$ quadratically once $X_k$ is sufficiently close to $X_*$, we let $C_1$ and $C_2$ be two numbers satisfying the following:
\begin{eqnarray*}
 \|
F(X_{k+1}) - F(X_{k})
-D F(X_k) [
 \Delta X_k]
\|
&\leq& C_1 \|
 \Delta X_k\|^2,\\
 \|
F(X_{k})
\|
&\leq& C_2 \mbox{dist} (X_k,X_*),
\end{eqnarray*}
for a sufficiently large $k$. {The above assumptions are true since $F$ is second differentiable and $F(X_*) = \mathbf{0}$}.
We can also observe that:
\begin{eqnarray}
\|
F(X_{k+1})
\| & \leq &
 \|
F(X_{k+1}) - F(X_{k})
-D F(X_k) [
 \Delta X_k]
\|
+ \|F(X_k) + DF(X_k)
 [\Delta X_k]\| \nonumber\\
 &\leq &
C_1 \|
 \Delta X_k\|^2 + \hat{\eta}_k \|F(X_k)\|
 \leq
 C_1
 (\Gamma_{m_k} \|F(X_k)\|)^2 +
 \|F(X_k)\|^2 \nonumber
 \\
 &\leq&
\left(C_1
 \Gamma^2 C_2^2+C_2^2\right) \mbox{dist} (X_k,X_*)^2, \label{eq:FXK2}
%
\end{eqnarray}
where $\Gamma =
{2  \left (
\dfrac{ 1+\eta_{\mathrm{max}} }{1-{\eta}_{\mathrm{max}}}
\right)}
 \|
DF(X_*)^\dagger
\|.
$

Since $X_k$ converges to $X_*$ as $k$ converges to infinity, for a sufficiently large $k$, it follows from conditions~\eqref{eq:DeltaXk2}, \eqref{eq:DeltaXk3},~\eqref{eq:tem2}, and~\eqref{eq:FXK2}
  that:
\begin{eqnarray*}
\mbox{dist}(X_{k+1}, X_*) &=& \lim_{p\rightarrow \infty} \mbox{dist}(X_{k+1}, X_p) \leq
\sum_{s = k}^\infty
\mbox{dist}\left(X_{s+1}, R_{X_{s+1}}(\Delta X_{s+1}) \right)  \\
&\leq& \sum_{s = k}^\infty
\nu \|\Delta X_{s+1}\|
\leq
\sum_{s=k}^{\infty}
\nu \Gamma_{m_{s+1}}(1+\eta_{\mathrm{max}}) \|F(X_{s+1})\|\\
&\leq&
\nu \Gamma
(1+\eta_{\mathrm{max}})
\sum_{j=0}^{\infty} (1-t(1-\eta_{\mathrm{max}}))^j\|F(X_{k+1})\|
\\
 &\leq& C \mbox{dist}(X_{k},X_*)^2,
\end{eqnarray*}
for some constant $C =  \dfrac{  \nu \Gamma
%
(1+\eta_{\mathrm{max}})
\left(C_1
 \Gamma^2C_2^2+C_2^2\right)
}{
 t(1-\eta_{\mathrm{max}})}
 $.
\qed
\end{proof}

It is true that we might assume without loss of generality that  the inverse of $DF(X_k)\circ DF(X_k)^*$ always exists numerically. However, 
once $DF(X_k)\circ DF(X_k)^*$ is ill-conditioned or (nearly) singular, we choose an operator $E_k = \sigma_k id_{T_{F(X_k)}}$, where $\sigma_k$ is a constant and $id_{T_{F(X_k)}}$ is an identity operator on $T_{F(X_k)} (\mathbb{R}^{n\times n})$ to make $DF(X_k)\circ DF(X_k)^* + \sigma_k id_{T_{F(X_k)}}$ well-conditioned or nonsingular. In the calculation, this replaces the calculation in Eq.~\eqref{eq:conjugate} with the following equation:
 \begin{equation*}
(DF(X_k)\circ DF(X_k)^* + \sigma_k id_{T_{F(X_k)}}) [\Delta Z_k] = -F(X_k).
\end{equation*}
That is, Algorithm~1 can be modified to fit in this case by replacing the satisfaction of condition~\eqref{eq:inm} with the following two conditions:
\begin{subequations}\label{eq:conjugate3}
 \begin{align}
&\|(DF(X_k)\circ DF(X_k)^* + \sigma_k id_{T_{F(X_k)}}) [\Delta {Z}_k]\| \leq \eta_{k} \|F(X_k)\|, \label{eq:conjugate31}\\
&
\|(DF(X_k)\circ DF(X_k)^*) [\Delta {Z}_k] + F(X_k)\| \leq \eta_{\mathrm{max}} \|F(X_k)\|,
\end{align}
\end{subequations}
where $\sigma_k :=  \mathrm{min}\left\{\sigma_{\mathrm{max}}, \|F(X_{k})\| \right\}$ is a selected
perturbation determined by the parameter $\sigma_{\mathrm{max}}$
and $\|F(X_{k})\|$.
Of course, we can provide the proof of the quadratic convergence under condition~\eqref{eq:conjugate3} without any difficulty (see~\cite{Zhao2017} for a similar discussion). Thus, we ignore the proof here. However, we note that even if a selected perturbation is applied to an ill-conditioned problem, the linear operator
$DF(X_k)\circ DF(X_k)^* + \sigma_k id_{T_{F(X_k)}}$ in condition~\eqref{eq:conjugate31} {might become nearly singular or ill-conditioned once $\sigma_k$ is small enough}. This will prevent the iteration in the CG method from converging in fewer than $n^2$ steps, and cause the value of $f'(0)$ to not be negative. This possibility suggests that we apply Algorithm~1 without performing any perturbation in our numerical experiments. If 
{the CG method cannot terminate within $n^2$ iterations}, it may be necessary to compute a new approximated solution ${\Delta Z}_k$ by selecting a new initial value for $X_0$.

\section{Numerical Experiments}
Note that the iteration of Algorithm~1 will be trapped without convergence to a solution if the IESP is unsolvable. As such, in our numerical experiments, we assume the existence of a solution of an IESP solution beforehand by generating sets of eigenvalues and singular values from a series of randomly generated matrices. 
{For a $2\times 2$ case, it is certain that Theorem~\ref{THM} in the appendix provides an alternative way to generate testing matrices.}
However, for general $n\times n$ matrices, the condition of the solvability of the IESP with some particular structure remains unknown and merits further investigation.
In this section, we
show how Algorithm~1
can be applied to solve an IESP with a particular structure.
We note that we performed all of the computations in this work in MATLAB version~2016a on a desktop  with a 4.2 GHZ Intel Core i7 processor and 32 GB of main memory. For our tests, we set
%
$\eta_{\mathrm{max}} = 0.9$, $\theta_{\mathrm{min}} = 0.1$, $\theta_{\mathrm{max}} = 0.9$, $t = 10^{-4}$, and $\epsilon = 10^{-10}$.  Also, in our computation, we emphasize two things. First, once the CG method computed in Algorithm~1 cannot be  terminated  within $n^2$ iterations, {restart Algorithm~1 with a different initial value $X_0$}. Second, {due to the rounding errors in numerical computation,} care must be taken in the selection of $\eta_{k}$ so that the upper bound
$\eta_k \|F(X_k)\|$ in condition~\eqref{eq:inm} {is not too small to cause the CG method abnormal. To this end, in our experiments, we use the condition
\begin{equation*}
\mbox{max}\{\eta_k \|F(X_k)\|, 10^{-12}\},
\end{equation*}
instead of $\eta_k \|F(X_k)\|$. The implementations of the Algorithm 1 are available online, say, 
~\url{http://myweb.ncku.edu.tw/~mhlin/Bitcodes.zip}. }

\begin{example} \label{ex1}
To demonstrate the capacity of our approach for solving problems that are relatively large, we randomly generate a set of eigenvalues and a set of singular values
of different size, say, $n = 20$, $60$, $100$, $150$, $200$, $500$, and {$700$} from matrices given by the MATLAB command:
\begin{equation*}
A = {\tt randn(n)}.
\end{equation*}
For each size, we perform $10$ experiments. To illustrate the elasticity of our approach, we randomly generate the initial value $X_0 = (U_0,V_0, W_0)$ in the following way:
\begin{equation*}
W_0 = {\tt   triu(randn(n))}, \, W_0{\tt (find(\Lambda))} = \mathbf{0},\mbox{ and } [ U_0,tmp,V_0 ]  = {\tt svd}(\Lambda+W_0).
\end{equation*}

In Table~\ref{tab5.1}, we show
the average residual value $({\rm Residual})$,
the average final error $({\rm Error})$, as defined by:
 \begin{equation*} 
{\mbox{final}\,\, \mbox{error}} =
\|\boldsymbol{\lambda}(A_{{\mathrm{new}}}) -  \boldsymbol{\lambda}\|_2 +
\|\boldsymbol{\sigma}(A_{{\mathrm{new}}}) -  \boldsymbol{\sigma}\|_2,
 \end{equation*}
the average  number of iterations within the {\rm CG} 
method {\rm (CGIt)}$\sharp$, {the average number of iterations within the inexact Newton method {\rm (INMIt)}$\sharp$, 
and the average elapsed time {\rm (Time)}},
as  performed by our algorithm. In Table~\ref{tab5.1}, we can see that {the elapsed time and the average number of iterations within the $\rm CG$ method increase dramatically} as the size of the matrices increases. 
This can be explained by the fact that the number of degrees of freedom of the problem increases significantly. Thus, the number of the iterations required by the {{\rm CG}} method {and the required computed time increase} correspondingly.  However, it is interesting to see that the required number of iterations {within the inexact Newton method} remains almost the same for matrices of different sizes. 
{One way to speed up the entire process of iterations is to transform the problem~\eqref{eq:conjugate}
into a form that is more suitable for the {\rm CG} method, for example, apply the {\rm CG} method with a preselected preconditioner. Still, this selection of the preconditioner requires further investigation.}

\begin{table}[h!!!]\label{tab5.1}
\begin{center}
\caption{Comparison of the required {\rm CGIt}$\sharp$, {\rm INMIt}$\sharp$, {\rm Residual}, {\rm Error} values, and Time for solving the IESP by Algorithm~1.}
\begin{tabular}
{ccccccccc}
\hline   $n$  & {CGIt$\sharp$} & {INMIt$\sharp$} 
& {Residual}
& \mbox{Error}
& \mbox{Time}
\\
\hline 
\rule{0pt}{2.3ex}%
20 & 208 & 9.4
& $5.54\times 10^{-12}$ 
&  $9.65 \times 10^{-13}$
&  $2.47 \times 10^{-2}$
   \\
%
60 & 740 & 10&
$8.13\times 10^{-12}$
&  $7.23 \times 10^{-13}$ 
&  $4.11 \times 10^{-1}$
\\
%
100 & 1231 & 10.4 &
$
1.06 \times 10^{-12}$
&  $ 
9.74
\times 10^{-14}$  
&  $2.22$
\\
%
150 & 1773 & 10.1 &
$1.01\times 10^{-12}$
&  $  1.06 \times 10^{-13}$  
& $6.82$
\\
%
200 & 1939 & 10.5 &
$1.20 \times 10^{-12}$
&  $ 1.49 \times 10^{-13}$  
& $19.3$
\\
%
500 & 6070 & 10.6 &
$1.47 \times 10^{-12}$
&  $ 4.12 \times 10^{-13}$  
& $665$
\\
%
700 & 8905 & 10.6 &
$5.42 \times 10^{-12}$
&  $ 7.24 \times 10^{-13}$  
& $2465$
\\
\hline
\end{tabular}
\end{center}
\end{table}

%
\end{example}

 \begin{example}\label{ex2}
 In this example, we use Algorithm~1 to construct a nonnegative matrix with prescribed eigenvalues and singular values and a specific structure. 
We specify this {\rm IESP} and call it the {\rm IESP} with desired entries {\rm (DIESP)}. The {\rm DIESP} can be defined as follows.

\vskip .1in
\begin{minipage}{0.9\textwidth}
{\rm (\textbf{DIESP})}
Given a subset $\mathcal{I} = \{(i_t,j_t)\}_{t=1}^\ell$ with double subscripts, a set of real numbers $\mathcal{K}  = \{k_t\}_{t=1}^\ell$,
a set of $n$ complex numbers $\{\lambda_i\}_{i=1}^n$, satisfying
  $\{\lambda_i\}_{i=1}^n = \{\bar{\lambda}_i\}_{i=1}^n$,
and a set of $n$ nonnegative numbers $\{\sigma_i\}_{i=1}^n$, find a nonnegative $n\times n$ matrix $A$ that has
eigenvalues $\lambda_1, \ldots, \lambda_n$,
singular values $\sigma_1,\ldots,\sigma_n$ and $A_{i_t,j_t} = k_t$ for $t = 1,\ldots, \ell$.

\end{minipage}
\vskip .1in

Note that once $i_t= j_t = t$ for $t = 1,\ldots, n$, we investigate a numerical approach for solving the {\rm IESP} with prescribed diagonal entries. As far as we know, {the research result close to} this problem is only available in~\cite{Chu2017}. However, for a general structure, no research has been conducted to implement this investigation.
To solve the {\rm DIESP}, our first step is to obtain a real matrix $A$ with prescribed eigenvalues and singular values. Our second step is to derive entries of $Q^\top A Q$, where $Q\in\mathcal{O}(n)$, that satisfy the nonnegative property and desired values determined by the sets $\mathcal{I}$ and $\mathcal{K}$.
We solve the first step in the same manner as in Example~\ref{ex1}, but for the second step, we consider the following two sets $\mathcal{L}_1$ and $\mathcal{L}_2$, which are defined by:
\begin{eqnarray*}
\mathcal{L}_1 &=&  \{A \in\mathbb{R}^{m\times n} \,|\,
A_{i_t,j_t} = k_t,\,\, \mbox{for}\,\, 1\leq t\leq \ell; \mbox{otherwise }
A_{i,j} = 0
\}, \\
\mathcal{L}_2 &=& \{A \in\mathbb{R}^{m\times n} \,|\,
A_{i,j} = 0,\,\, \mbox{for}\,\,
1\leq i,j \leq n \,\, \mbox{and} \,\, (i,j)\in\mathcal{I}\},
\end{eqnarray*}
and then solve the following problem:
\begin{equation}\label{eq:fxs}
\mbox{find }
P \in \mathcal{L}_2 \mbox{ and }
Q \in
\mathcal{O}(n)
\mbox{ such that }
H(P,Q) =\hat{A} + P\odot P - Q AQ^\top = \mathbf{0},
%
\end{equation}
%
with $\hat{A} \in\mathcal{L}_1$.
 Let
$ [A, B]:= AB - BA$ denote the Lie bracket notation.
It follows from direct computation that the corresponding differential $DH$ and its adjoint $DH^*$ have the following form~\cite{Zhao2017}:
\begin{eqnarray*}
 DH(P,Q)[(\Delta P, \Delta Q)] & = &
2 P\odot \Delta P +  [  Q A Q^\top,
    \Delta Q Q^\top],\\
 DH(P,Q)^*
 [\Delta Z] & = &
 \left (
2P\odot \Delta Z,
 \frac{1}{2}(
[QA Q^\top, \Delta Z^\top]
+
[Q A^\top Q^\top, \Delta Z]
)Q
 \right ),
\end{eqnarray*}
and, for all $(\xi_P, \xi_Q) \in T_{(P,Q)} (\mathcal{L}_2
 \times \mathcal{O}(n))$, we can compute
the retraction $R$ using the following formula:
\begin{equation*}
R(P,Q) = (R_P(\xi_P),R_Q(\xi_Q)),
\end{equation*}
where
\begin{equation*}
R_P(\xi_P) = P+\xi_P, \quad
R_Q(\xi_Q) = qf(Q+\xi_Q).
\end{equation*}

For these experiments, we randomly generate nonnegative matrices $20\times 20$ in size by the MATLAB command ``$A = rand(20)$"
to provide the desired eigenvalues, singular values, and diagonal entries, i.e., to solve the {\rm DIESP} with the specified diagonal entries.
We record the~{final error}, as given by the following formula:
 \begin{equation*} 
\mbox{final error} =
\|\boldsymbol{\lambda}(A_{\mathrm{new}}) -  \boldsymbol{\lambda}\|_2 +
\|\boldsymbol{\sigma}(A_{\mathrm{new}}) -  \boldsymbol{\sigma}\|_2
+ \|(A_{\mathrm{new}})_{i_t,j_t } - k_t \|_2.
 \end{equation*}
 After randomly choosing 10 different matrices, Table~\ref{tab5.2} shows our results with the intervals {\rm (Interval)} containing all of the residual values and final errors, and their corresponding average values {\rm (Average)}.  These results provide sufficient evidence that Algorithm~1 can be applied to solve the DIESP with high accuracy.

%
%
%
%
%
 \begin{table}[h!!!]\label{tab5.2}
\begin{center}
\caption{Records of final errors and residual values for solving the DIESP by Algorithm~1.}
\begin{tabular}{ l c c }
  \hline   & Interval   & Average  \\\hline
  \rule{0pt}{2.3ex}%
  final errors & $[7.27\times 10^{-13}, 1.21\times 10^{-11}]$
& $2.91\times 10^{-12}$ \\
  residual values  & $[7.77\times 10^{-13}, 4.93 \times 10^{-12}]$  & $1.85\times 10^{-12}$  \\\hline
\end{tabular}
\end{center}
\end{table}

%
%
%
%
%

\end{example}

Although  Example~\ref{ex2} considers examples with a  nonnegative structure, we emphasize that
Algorithm~1 can work with entries that are not limited to being nonnegative. That is,
%
%
to solve the IESP without nonnegative constraints but with another specific structure, Algorithm~1
can fit perfectly well by replacing $H(P,Q)$ in problem~\eqref{eq:fxs}  with
\begin{equation*}
G(S,Q) :=  \hat{A} + S - QAQ^\top,
\end{equation*}
where $\hat{A} \in\mathcal{L}_1$,  $S\in\mathcal{L}_2$ and $Q\in\mathcal{O}(n)$.

\section{Conclusions}
{
In this paper, we apply the Riemannian inexact Newton method to solve an initially complicated and challenging IESP. We provide a thorough analysis of the entire iterative processes and show that this algorithm converges globally and quadratically to the desired solution. We must emphasize that our theoretical discussion and numerical implementations can also be extended to solve an IESP with a particular structure such as desired diagonal entries and a matrix whose entries are nonnegative. This capacity can be observed in our numerical experiments. It should be emphasized that this research is the first to provide a unified and effective means to solve the IESP with or without a particular structure.

However, the numerical stability for extremely ill-conditioned problems is a case that we should pay attention to, though reselecting the initial values could be a strategy to get rid of this difficulty.  Another way to tackle this difficulty is to select a good preconditioner. But, the operator encountered in our algorithm is nonlinear and high-dimensional. {Thus, the selection of the preconditioner could involve the study of tensor analysis, where further research is needed.}

Theoretically determining the sufficient and necessary condition for solving IESPs of any specific structure, including a stochastic, Toeplitz, or Hankel structure, is challenging and interesting. In the appendix, we provide the solvability condition of the IESP with real or nonnegative matrices of size $2\times 2$ real/nonnegative matrices, while the desired eigenvalues, singular values, and main diagonal entries are given. We hope that this discussion can motivate a further discussion shortly.

}

%
%
%
%
%
%

\appendix
\section{Appendix
}

\subsection{The solvability of the IESP of a $2\times 2$ matrix}

For the IESP, the authors in~\cite{Chu2017} use a geometric argument to investigate a necessary and sufficient condition for the  existence of a $2 \times 2$ real matrix with prescribed diagonal entries. This argument also leads to a sufficient algebraic but not necessary condition 
for the construction of a $2\times 2$ real matrix. In this {appendix}, the algebraic condition under which a $2 \times 2$ real matrix or even nonnegative matrix can be constructed in closed form, given its eigenvalue, singular values, and main diagonal entries.
To do so, we must have the following results. The first result, the so-called Mirsky condition, provides
the classical relationship between
the eigenvalues $\boldsymbol{\lambda} = \{\lambda_1,\ldots,\lambda_n\}$
 and
the diagonal entries $\mathbf{d}
= \{d_1,\ldots,d_n\}
$.

\begin{theorem}{\rm{[\cite{Mirsky1958}, Mirsky condition].}}\label{Mirsky}
There exists a real matrix $A\in\mathbb{R}^{n\times n}$ having
eigenvalues $\boldsymbol{\lambda} = \{\lambda_1,\ldots,\lambda_n\}$ and
main diagonal entries $\mathbf{d}
= \{d_1,\ldots,d_n\}
$, that are possibly in different order, if and only if
\begin{equation}\label{eq:Mirsky}
\sum_{i=1}^n \lambda_i = \sum_{i=1}^n d_i.
\end{equation}
\end{theorem}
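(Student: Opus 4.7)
The plan is to treat necessity and sufficiency separately. Necessity is immediate from the identity $\mathrm{tr}(A)=\sum_i\lambda_i=\sum_i A_{ii}$, which any candidate matrix must satisfy.

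For sufficiency I would argue by induction on $n$, with an explicit $2\times 2$ construction as the engine of the induction. Given $\mu_1,\mu_2$ that are either both real or a complex conjugate pair, and real $a,b$ with $a+b=\mu_1+\mu_2$, the product $\mu_1\mu_2$ is real, so
\[
M(a,b;\mu_1,\mu_2)=\begin{pmatrix} a & 1 \\ ab-\mu_1\mu_2 & b \end{pmatrix}
\]
is a real matrix whose trace and determinant agree with $\mu_1+\mu_2$ and $\mu_1\mu_2$, hence whose characteristic polynomial is $(x-\mu_1)(x-\mu_2)$ and whose diagonal is exactly $(a,b)$. This also disposes of the base case $n=2$.

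For $n\geq 3$, I would start from the real block-diagonal carrier $A^{(0)}$ whose blocks are the real eigenvalues (as $1\times 1$ scalars) together with $2\times 2$ real Schur blocks realising each complex conjugate pair, so that $A^{(0)}$ already has the prescribed spectrum but generally the wrong diagonal. I then sweep the target values $d_1,\dots,d_n$ into position by a sequence of real $2\times 2$ similarity transformations: at step $k$, positions $1,\dots,k-1$ already holding $d_1,\dots,d_{k-1}$ remain untouched, and a similarity supported on rows and columns $(k,j)$ with some $j>k$ is chosen by applying $M(\cdot,\cdot;\cdot,\cdot)$ to the current $2\times 2$ principal submatrix so that its $(1,1)$ entry becomes $d_k$; the $(2,2)$ entry is then forced by the trace of that submatrix. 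Since each similarity is real and leaves the global spectrum invariant, the prescribed eigenvalues are preserved throughout, and the trace identity $\sum_i d_i=\sum_i\lambda_i$ automatically guarantees that the final unset diagonal entry equals $d_n$.

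The hard part will be handling the degenerate situation in which some active $2\times 2$ principal submatrix happens to be a scalar multiple of the identity, so that no similarity supported on it can alter its diagonal. My strategy is either to vary the choice of auxiliary index $j>k$ until a non-scalar submatrix is found, or alternatively to precede the sweep with a single generic real orthogonal similarity of $A^{(0)}$ that destroys all such incidental scalar submatrices at once. The only configuration from which no such escape exists is $A^{(0)}=\lambda_1 I$, i.e.\ all $\lambda_i$ coincide, in which case \eqref{eq:Mirsky} forces $d_i=\lambda_1$ for every $i$ and the conclusion is trivial with $A=\lambda_1 I$.
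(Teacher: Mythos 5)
The paper does not prove this statement; it is quoted as a classical background result with a citation to Mirsky (1958), so there is no in-paper proof to compare yours against. Your necessity argument is fine, and your $2\times2$ engine $M(a,b;\mu_1,\mu_2)$ is correctly constructed: it has trace $a+b$, determinant $\mu_1\mu_2$, diagonal $(a,b)$, real entries when $a,b\in\mathbb{R}$ and $\mu_1\mu_2\in\mathbb{R}$, and any non-scalar real $2\times2$ matrix is real-similar to $M(a,\tau-a;\cdot,\cdot)$ for any prescribed real $a$. The sweep idea, reserving the last diagonal entry for the trace identity, is also sound in outline.

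The gap is in the degenerate case. You assert that the only unrepairable obstruction is $A^{(0)}=\lambda_1 I$ and that there condition~\eqref{eq:Mirsky} ``forces $d_i=\lambda_1$ for every $i$.'' That implication is false: \eqref{eq:Mirsky} constrains only $\sum_i d_i=n\lambda_1$, not the individual $d_i$. Take $\lambda_1=\lambda_2=0$ and $(d_1,d_2)=(1,-1)$: the condition holds, the theorem is true (the nilpotent matrix $\bigl(\begin{smallmatrix}1&1\\-1&-1\end{smallmatrix}\bigr)$ has spectrum $\{0,0\}$ and diagonal $(1,-1)$), but your sweep is stuck at step one because every $2\times2$ principal submatrix of $0\cdot I$ is scalar, and your proposed escape of a preliminary orthogonal conjugation does nothing since $Q^\top(\lambda_1 I)Q=\lambda_1 I$. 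The same failure occurs whenever all $\lambda_i$ coincide but the $d_i$ do not. To repair it you would need, e.g., to shift to $\lambda=0$, build a nilpotent matrix with the shifted diagonal by a direct bordering/induction argument (your $2\times2$ engine applies once you feed it a non-scalar companion-type block), and shift back. Separately, even when $A^{(0)}$ is not scalar, the claim that one generic orthogonal conjugation prevents scalar $2\times2$ blocks from arising at \emph{every} intermediate stage of the sweep is asserted rather than argued; you would need an inductive genericity statement or an alternative escape to make the sweep unconditionally terminate.
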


The second result provides the relationship between the singular values $\boldsymbol{\sigma}$ and main diagonal entries $\boldsymbol{d}$ of a $2\times 2$ nonnegative matrix.
\begin{theorem}
{\rm{[\cite{Wu2014}, Theorem 2.1].}}
%
\label{THM2}
There exists a nonnegative matrix
$
A=
\left[\begin{array}{cc}d_1 & b \\c & d_2\end{array}\right]
\in\mathbb{R}^{2\times 2}
$
having  the singular values $\sigma_1\geq\sigma_2$ and main diagonal entries $d_1\geq d_2$, with renumbering if necessary,  if and only if
\begin{subequations}\label{eq:nisvp}
\begin{eqnarray}
  \sigma_1 + \sigma_2 \geq d_1+d_2,\,\,
  \sigma_1 - \sigma_2 &\geq& d_1-d_2,\quad
     \mbox{if }
 bc - d_1d_2 \leq 0,\label{eq:22cond}\\
\sigma_1- \sigma_2 &\geq& d_1 + d_2, \quad  \mbox{if }bc - d_1d_2 > 0.\label{eq:22cond2}
\end{eqnarray}
\end{subequations}

\end{theorem}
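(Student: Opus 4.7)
The plan is to convert the existence question into a pair of algebraic constraints on the unknown off-diagonal entries $b$ and $c$. For the matrix $A$ in the statement, the Frobenius-norm identity and the $|\det|$ identity for a $2\times 2$ matrix give
\[
b^2+c^2 = \sigma_1^2+\sigma_2^2-d_1^2-d_2^2, \qquad |d_1 d_2-bc|=\sigma_1\sigma_2.
\]
The absolute value in the second equation produces exactly the dichotomy $bc-d_1d_2\le 0$ versus $bc-d_1d_2>0$, yielding $bc=d_1d_2-\sigma_1\sigma_2$ or $bc=d_1d_2+\sigma_1\sigma_2$, respectively. The task then becomes: in each of the two cases, determine when the resulting system admits solutions $b,c\ge 0$.

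I would next rephrase ``$b,c\ge 0$ exist'' in terms of the symmetric polynomials $b^2+c^2$ and $bc$: these two values determine $b$ and $c$ up to swapping as the roots of a quadratic in $t$, and a pair of nonnegative real roots exists precisely when $(b+c)^2\ge 0$, $(b-c)^2\ge 0$, and $bc\ge 0$. Expanding $(b\pm c)^2 = (b^2+c^2)\pm 2bc$ and substituting the Case~1 formula for $bc$ yields
\[
(b-c)^2 = (\sigma_1+\sigma_2)^2-(d_1+d_2)^2,\qquad (b+c)^2 = (\sigma_1-\sigma_2)^2-(d_1-d_2)^2,
\]
whose joint nonnegativity, combined with the orderings $\sigma_1\ge \sigma_2$ and $d_1\ge d_2$, is equivalent to the two inequalities in~\eqref{eq:22cond}. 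The analogous Case~2 substitution produces $(b-c)^2=(\sigma_1-\sigma_2)^2-(d_1+d_2)^2$ and $(b+c)^2=(\sigma_1+\sigma_2)^2-(d_1-d_2)^2$; only the former gives a non-redundant constraint, namely~\eqref{eq:22cond2}, while the latter is automatically forced by it together with the orderings.

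Sufficiency is then completed by the explicit reconstruction $b=\tfrac{1}{2}\bigl(\sqrt{(b+c)^2}+\sqrt{(b-c)^2}\bigr)$ and $c=\tfrac{1}{2}\bigl(\sqrt{(b+c)^2}-\sqrt{(b-c)^2}\bigr)$, which produces nonnegative reals once the two squares are nonnegative and $bc\ge 0$. The main obstacle I anticipate is the compatibility condition $bc\ge 0$ in Case~1, which reduces to $d_1 d_2\ge \sigma_1\sigma_2$: this inequality is not automatic from the two stated inequalities alone and has to be absorbed into the case hypothesis $bc-d_1d_2\le 0$. A careful accounting of the two subcases, together with the nonnegativity of the four input scalars, should then close the argument.
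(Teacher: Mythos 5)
The paper does not actually prove Theorem~\ref{THM2}: it is quoted verbatim from \cite{Wu2014} (Theorem 2.1 there) with no proof supplied, so there is no in-paper argument to compare against. Judged on its own, your proposal is correct and is the natural route: encode the data through the invariants $\|A\|_F^2=\sigma_1^2+\sigma_2^2$ and $|\det A|=\sigma_1\sigma_2$, split on the sign of $\det A=d_1d_2-bc$, eliminate $b,c$ via $(b\pm c)^2$, and read off the inequalities. Your four identities for $(b\pm c)^2$ in the two cases check out; the redundancy you identify in Case~2 is genuine, since $\sigma_1+\sigma_2\geq\sigma_1-\sigma_2\geq d_1+d_2\geq d_1-d_2\geq 0$ already forces $(\sigma_1+\sigma_2)^2\geq(d_1-d_2)^2$; and you correctly flag the one delicate point, namely that in Case~1 the requirement $bc=d_1d_2-\sigma_1\sigma_2\geq 0$ does not follow from \eqref{eq:22cond} alone. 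Your instinct to ``absorb'' it into the side hypothesis is exactly right and should be made explicit: for a nonnegative matrix, $bc-d_1d_2\leq 0$ forces $\det A=d_1d_2-bc\geq 0$, hence $\sigma_1\sigma_2=\det A\leq d_1d_2$, and conversely if $d_1d_2\geq\sigma_1\sigma_2$ the constructed $bc$ is nonnegative and lands in Case~1. Adding that one sentence, together with the remark that in Case~2 the sign $bc=d_1d_2+\sigma_1\sigma_2\geq 0$ is free, closes the argument; this matches the method one expects in the cited source.
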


In particular, entries from matrix $A$ can be relaxed to real numbers, and condition~\eqref{eq:nisvp} is also true for the construction of a $2\times 2$ real matrix. The proof is almost identical to that in~\cite[Lemma 2.1]{Wu2014}. The major change is the substitution of nonnegative entries for real entries. Thus, we skip its proof here.
%
\begin{theorem}\label{THM}
There exists a real matrix
$
A=
\left[\begin{array}{cc}d_1 & b \\c & d_2\end{array}\right]
\in\mathbb{R}^{2\times 2}
$
having  singular values $\sigma_1\geq\sigma_2$ and main diagonal entries $d_1\geq d_2$, with renumbering if necessary,  if and only if
\begin{subequations}
\begin{eqnarray*}
  \sigma_1 + \sigma_2 \geq d_1+d_2,\,\,
  \sigma_1 - \sigma_2 &\geq& d_1-d_2,\quad
     \mbox{if }
 bc - d_1d_2 \leq 0,
 \\
\sigma_1- \sigma_2 &\geq& d_1 + d_2, \quad  \mbox{if }bc - d_1d_2 > 0.
\end{eqnarray*}
\end{subequations}

\end{theorem}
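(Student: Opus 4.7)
The plan is to mirror the proof of Theorem~\ref{THM2} from~\cite{Wu2014}, the only structural difference being that we drop the requirement $b,c\geq 0$. For the matrix $A$ with diagonal $d_1,d_2$ and off-diagonal entries $b,c$, the two singular values $\sigma_1\geq\sigma_2$ are completely determined by the symmetric invariants
\[
\sigma_1^2+\sigma_2^2=\mathrm{tr}(A^{\top}A)=d_1^2+d_2^2+b^2+c^2,\qquad
\sigma_1\sigma_2=\sqrt{\det(A^{\top}A)}=|d_1d_2-bc|.
\]
Hence the existence of a real matrix with the prescribed data reduces to the existence of real numbers $b,c$ solving the pair of polynomial equations
\[
b^2+c^2=\sigma_1^2+\sigma_2^2-d_1^2-d_2^2,\qquad bc=d_1d_2\mp\sigma_1\sigma_2,
\]
where the sign is $-$ when $bc-d_1d_2\leq 0$ and $+$ when $bc-d_1d_2>0$.

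The key manipulation decouples this system via the identities $(b\pm c)^2=(b^2+c^2)\pm 2bc$. In the case $bc-d_1d_2\leq 0$, substituting yields
\[
(b-c)^2=(\sigma_1+\sigma_2)^2-(d_1+d_2)^2,\qquad (b+c)^2=(\sigma_1-\sigma_2)^2-(d_1-d_2)^2,
\]
while the case $bc-d_1d_2>0$ interchanges the roles of $\sigma_1+\sigma_2$ and $\sigma_1-\sigma_2$. The reality of $b,c$ is equivalent to both right-hand sides being nonnegative; together with the orderings $\sigma_1\geq\sigma_2\geq 0$ and $d_1\geq d_2$, this reads precisely as the inequalities asserted in the theorem, establishing necessity. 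For sufficiency one runs the argument in reverse: set $p=\sqrt{(\sigma_1+\sigma_2)^2-(d_1+d_2)^2}$ and $q=\sqrt{(\sigma_1-\sigma_2)^2-(d_1-d_2)^2}$ (or the analogous quantities for the second case), define $b=(q+p)/2$ and $c=(q-p)/2$, and verify by direct substitution that the resulting $A$ realizes the prescribed trace of $A^{\top}A$ and the prescribed $|\det A|$, so that its singular values coincide with $\sigma_1,\sigma_2$ while its diagonal is preserved.

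The principal obstacle, and the only place where the argument differs from the nonnegative case treated in~\cite{Wu2014}, is the bookkeeping of signs in the construction step. In the nonnegative setting one must additionally enforce $b,c\geq 0$, which forces $bc\geq 0$ and thereby requires one to derive $d_1d_2\geq\sigma_1\sigma_2$ from the stated inequalities before invoking elementary symmetric-function arguments; here, by contrast, the freedom to pick either sign for the square roots $\pm p$ and $\pm q$ makes this verification unnecessary and allows one to land in whichever case ($bc-d_1d_2\leq 0$ or $>0$) the hypotheses dictate. Consequently the Wu--Chu argument carries over essentially verbatim once the nonnegativity checks are deleted, and the final matrix can always be realized in closed form from the square roots above.
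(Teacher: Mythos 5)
The paper itself gives no detailed proof of Theorem~\ref{THM}: it simply remarks that the argument is ``almost identical'' to~\cite[Lemma~2.1]{Wu2014} with nonnegativity of the entries dropped. Your proposal spells out precisely the argument that remark alludes to --- reduce to $\sigma_1^2+\sigma_2^2=\operatorname{tr}(A^\top A)$ and $\sigma_1\sigma_2=|\det A|$, decouple via $(b\pm c)^2$, and translate nonnegativity of the resulting squares into the stated inequalities --- so your route is the paper's route, and the algebra you give is correct. One caveat worth flagging, although it is inherited from the theorem's wording rather than introduced by you: when $d_1+d_2<0$ (impossible in the nonnegative setting of Theorem~\ref{THM2} but a priori possible for a real matrix), the inequality $\sigma_1+\sigma_2\ge d_1+d_2$ is strictly weaker than the nonnegativity of $(\sigma_1+\sigma_2)^2-(d_1+d_2)^2$ that your square root $p$, and hence the construction of $b,c$, actually requires (for instance $d_1=0$, $d_2=-3$, $\sigma_1=\sigma_2=1$ satisfies the stated inequalities but gives $b^2+c^2<0$). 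To make the ``if'' direction airtight one should either read the inequality with an absolute value, $\sigma_1+\sigma_2\ge|d_1+d_2|$, or first reduce without loss of generality to $d_1\ge d_2\ge 0$ by negating rows of $A$, which leaves the singular values unchanged. Apart from this imprecision shared with the source, your proof is correct and matches the paper's intended approach.
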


Now we have the condition of the existence of a $2\times 2$ matrix provided with eigenvalues and main diagonal entries, or singular values and main diagonal entries. The next theorem, unsolved in~\cite{Wu2014}, deals with the case in which the three constraints---eigenvalues, singular values, and
main diagonal entries---
are of simultaneous concern.

\begin{theorem}\label{thm:three}
There exists a real matrix
$
A=
\left[\begin{array}{cc}d_1 & b \\c & d_2\end{array}\right]
\in\mathbb{R}^{2\times 2}
$
having {eigenvalues $|\lambda_1|\geq |\lambda_2|$,
singular values $\sigma_1\geq\sigma_2$, and
 main diagonal entries $d_1\geq d_2$,}
with renumbering if necessary,
 if and only if
 \begin{equation}\label{eq:eigdiasig}
 \lambda_1+\lambda_2  = d_1+d_2, \,\,
\sigma_1\geq |\lambda_1|,\,\, |\lambda_1\lambda_2| = \sigma_1\sigma_2,
 \end{equation}
 and
\begin{subequations}\label{eq:diasig}
\begin{eqnarray}
  \sigma_1 + \sigma_2 \geq d_1+d_2,\,\,
  \sigma_1 - \sigma_2 &\geq& d_1-d_2,\quad
     \mbox{if }
 bc - d_1d_2 \leq 0,\\
\sigma_1- \sigma_2 &\geq& d_1 + d_2, \quad  \mbox{if }bc - d_1d_2 > 0.
\end{eqnarray}
\end{subequations}

 \end{theorem}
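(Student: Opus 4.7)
The plan is to establish both implications by parametrizing all $2\times 2$ candidate matrices and reducing the three-constraint problem to the two-constraint results already in the excerpt.

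For necessity, each condition is read off from an $A$ satisfying all three constraints simultaneously: the identity $\lambda_1+\lambda_2 = d_1+d_2$ is Mirsky's condition (Theorem~\ref{Mirsky}), while $\sigma_1 \geq |\lambda_1|$ and $\sigma_1\sigma_2 = |\lambda_1\lambda_2|$ are just the Weyl--Horn relations~\eqref{eq:Weyl} for $n=2$; the block~\eqref{eq:diasig} is exactly Theorem~\ref{THM} applied to the singular-value and diagonal data of $A$. For sufficiency, the approach is to build $A = \begin{pmatrix} d_1 & b \\ c & d_2\end{pmatrix}$ explicitly. Forcing $\det A = \lambda_1\lambda_2$ and $\|A\|_F^2 = \sigma_1^2+\sigma_2^2$ is equivalent to
\begin{equation*}
bc = d_1 d_2 - \lambda_1\lambda_2, \qquad b^2 + c^2 = \sigma_1^2 + \sigma_2^2 - d_1^2 - d_2^2.
\end{equation*}
Combined with the diagonal-induced trace $\mathrm{tr}(A) = d_1+d_2 = \lambda_1+\lambda_2$, these pin down the characteristic polynomial and hence the eigenvalues, and together with $|\det A| = \sigma_1\sigma_2$ and the Frobenius identity they force the singular values of $A$ to be $\sigma_1,\sigma_2$. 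The entire task therefore reduces to exhibiting real $b,c$ satisfying the two displayed equations, i.e.\ verifying $(b+c)^2 \geq 0$ and $(b-c)^2 \geq 0$.

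A direct expansion gives
\begin{align*}
(b+c)^2 &= \sigma_1^2+\sigma_2^2 - (d_1-d_2)^2 - 2\lambda_1\lambda_2,\\
(b-c)^2 &= \sigma_1^2+\sigma_2^2 - (d_1+d_2)^2 + 2\lambda_1\lambda_2,
\end{align*}
after which the argument splits on the sign of $\lambda_1\lambda_2$, which matches the case split in~\eqref{eq:diasig} because $bc - d_1 d_2 = -\lambda_1\lambda_2$. Using $\sigma_1\sigma_2 = |\lambda_1\lambda_2|$, when $\lambda_1\lambda_2 \geq 0$ the two squares collapse to $(\sigma_1-\sigma_2)^2-(d_1-d_2)^2$ and $(\sigma_1+\sigma_2)^2-(d_1+d_2)^2$, whose non-negativity is precisely~\eqref{eq:diasig}; when $\lambda_1\lambda_2<0$ they collapse to $(\sigma_1+\sigma_2)^2-(d_1-d_2)^2$ and $(\sigma_1-\sigma_2)^2-(d_1+d_2)^2$, the second of which matches the inequality in~\eqref{eq:diasig}. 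The main obstacle I anticipate is passing from the linear inequalities in~\eqref{eq:diasig} to the squared ones (i.e.\ handling the absolute values $|d_1 \pm d_2|$ that appear once one squares) and closing the residual inequalities that are not literally listed in~\eqref{eq:diasig}. For this I would lean on the sharpened Weyl--Horn consequence
\begin{equation*}
\sigma_1^2+\sigma_2^2 - \lambda_1^2 - \lambda_2^2 \;=\; \frac{(\sigma_1^2 - \lambda_1^2)(\sigma_1^2 - \lambda_2^2)}{\sigma_1^2} \;\geq\; 0,
\end{equation*}
obtained from $\sigma_1 \geq |\lambda_1| \geq |\lambda_2|$ and $\sigma_1\sigma_2 = |\lambda_1\lambda_2|$; substituting into the expressions for $(b\pm c)^2$ and invoking the trace identity $\lambda_1+\lambda_2 = d_1+d_2$ should convert the remaining inequalities into a combination of~\eqref{eq:eigdiasig} and~\eqref{eq:diasig}, completing the construction.
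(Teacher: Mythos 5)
Your sufficiency construction---solving $bc = d_1 d_2 - \lambda_1\lambda_2$ and $b^2+c^2 = \sigma_1^2+\sigma_2^2 - d_1^2-d_2^2$ so that trace, determinant, and Frobenius norm are all pinned at once---is a more explicit variant of what the paper does: the paper simply invokes Theorem~\ref{THM} to produce $A$ with the prescribed singular values, diagonal entries, and sign of $\det A = d_1 d_2 - bc$, and leaves implicit the observation that trace together with the (signed) determinant then force the eigenvalues. So the two proofs share the same skeleton; yours carries out the arithmetic that the cited lemma hides.

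However, the gap you flag in your last paragraph is genuine, and it is not closable by the Weyl--Horn consequence you propose. In the case $\lambda_1\lambda_2 < 0$ (equivalently $bc - d_1 d_2 > 0$) you need
$(b+c)^2 = (\sigma_1+\sigma_2)^2 - (d_1-d_2)^2 \geq 0$, i.e.\ $\sigma_1+\sigma_2 \geq d_1 - d_2$; but the only inequality from~\eqref{eq:diasig} available in this case is $\sigma_1-\sigma_2 \geq d_1+d_2$, and your sharpened Weyl--Horn identity yields only $\sigma_1-\sigma_2 \geq |d_1+d_2|$. Once $d_2 < 0$ is allowed, neither implies $\sigma_1+\sigma_2 \geq d_1-d_2$. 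A concrete failure: $d_1=1$, $d_2=-3$, $\sigma_1=3$, $\sigma_2=\tfrac12$, $\lambda_{1,2}=-1\mp\sqrt{5/2}$. Then $\lambda_1+\lambda_2 = d_1+d_2 = -2$, $|\lambda_1\lambda_2| = \sigma_1\sigma_2 = \tfrac32$, $\sigma_1 \geq |\lambda_1| = 1+\sqrt{5/2}$, and $\sigma_1-\sigma_2 = \tfrac52 \geq d_1+d_2$, so every hypothesis of the theorem is met; yet $\sigma_1^2+\sigma_2^2 = \tfrac{37}{4} < 10 = d_1^2+d_2^2$, so $b^2+c^2$ would have to be negative and no such real matrix exists. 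This not only shows your intended closing step must fail, it shows that Theorem~\ref{THM} as stated (and therefore the paper's one-line appeal to it) is incorrect once the nonnegativity of the diagonal from Theorem~\ref{THM2} is dropped: the crucial chain $\sigma_1+\sigma_2 \geq \sigma_1-\sigma_2 \geq d_1+d_2 \geq d_1-d_2$ uses $d_2\geq 0$. The obstruction lies in the statement, not in your strategy; your case $\lambda_1\lambda_2 \geq 0$ does close cleanly using $\sigma_1+\sigma_2 \geq |\lambda_1|+|\lambda_2| \geq |\lambda_1+\lambda_2| = |d_1+d_2|$.
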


\begin{proof}
Assume that conditions~\eqref{eq:eigdiasig} and~\eqref{eq:diasig} are satisfied.
Following from the Weyl-Horn and Mirsky conditions, we know that for any $2\times 2$ matrix, its eigenvalues, singular values, and diagonal entries must satisfy condition~\eqref{eq:eigdiasig}. Thus, Theorem~\ref{THM} implies that once condition~\eqref{eq:diasig} is satisfied, it suffices to say that there exists a $2\times 2$ real matrix.
%
%
%
%

On the other hand, the sufficient condition follows directly from the Weyl-Horn condition~\eqref{eq:Weyl}, the Mirsky condition~\eqref{eq:Mirsky}, and Theorem~\ref{THM}. This completes the proof.
\qed

\end{proof}

Since the solvability conditions of Theorem~\ref{THM2} and Theorem~\ref{THM} are equivalent, we can see that the solvability condition in Theorem~\ref{thm:three} can be confined to be the necessary and sufficient condition for the existence of a nonnegative $2\times 2$ matrix. {We summarize this result as follows.}
\begin{corollary}
There exists a nonnegative matrix
$
A=
\left[\begin{array}{cc}d_1 & b \\c & d_2\end{array}\right]
\in\mathbb{R}^{2\times 2}
$
having {eigenvalues $|\lambda_1|\geq |\lambda_2|$,
singular values $\sigma_1\geq\sigma_2$, and
 main diagonal entries $d_1\geq d_2$,}
with renumbering if necessary,
 if and only if
 \begin{equation*}
 \lambda_1+\lambda_2  = d_1+d_2, \,\,
\sigma_1\geq |\lambda_1|,\,\, |\lambda_1\lambda_2| = \sigma_1\sigma_2,
 \end{equation*}
 and
\begin{subequations}
\begin{eqnarray*}
  \sigma_1 + \sigma_2 \geq d_1+d_2,\,\,
  \sigma_1 - \sigma_2 &\geq& d_1-d_2,\quad
     \mbox{if }
 bc - d_1d_2 \leq 0,\\
\sigma_1- \sigma_2 &\geq& d_1 + d_2, \quad  \mbox{if }bc - d_1d_2 > 0.
\end{eqnarray*}
\end{subequations}

 \end{corollary}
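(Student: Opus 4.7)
My plan is to reduce the corollary to two previously established results: Theorem~\ref{thm:three} (the real-matrix version with all three constraints) and Theorem~\ref{THM2} (the nonnegative $2\times 2$ construction with prescribed singular values and diagonal entries). The crucial observation, already highlighted immediately before the statement, is that Theorem~\ref{THM2} (nonnegative case) and Theorem~\ref{THM} (real case) have \emph{exactly} the same algebraic solvability condition \eqref{eq:nisvp}. Since Theorem~\ref{thm:three} is proved by combining Theorem~\ref{THM} with the Weyl--Horn and Mirsky conditions, the same argument goes through with Theorem~\ref{THM2} used in place of Theorem~\ref{THM}, and nothing else needs to change.

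For the necessity direction, I would argue as follows. A nonnegative matrix is in particular a real matrix, so Weyl--Horn applied to the $2\times 2$ case yields $\sigma_1\geq |\lambda_1|$ and $\sigma_1\sigma_2=|\lambda_1\lambda_2|$; the Mirsky identity from Theorem~\ref{Mirsky} gives $\lambda_1+\lambda_2 = d_1+d_2$; and the necessity half of Theorem~\ref{THM2} gives the case-split inequalities in \eqref{eq:diasig}. That covers every condition in the corollary.

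For sufficiency, I would invoke Theorem~\ref{THM2} to produce a nonnegative matrix $A=\bigl[\begin{smallmatrix}d_1 & b\\ c & d_2\end{smallmatrix}\bigr]$ with the prescribed singular values and diagonal. Its trace $d_1+d_2$ equals $\lambda_1+\lambda_2$ by assumption, and its determinant satisfies $|\det A|=\sigma_1\sigma_2=|\lambda_1\lambda_2|$. The sign of $\det A=d_1d_2-bc$ is pinned down exactly by which branch of \eqref{eq:diasig} applies, because $bc-d_1d_2=-\det A=-\lambda_1\lambda_2$; thus $bc-d_1d_2\leq 0$ corresponds to $\lambda_1\lambda_2\geq 0$ and $bc-d_1d_2>0$ corresponds to $\lambda_1\lambda_2<0$. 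Consequently $\det A=\lambda_1\lambda_2$, and together with the matching trace this forces the eigenvalues of $A$ to be $\lambda_1$ and $\lambda_2$.

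There is essentially no hard step: the proof is a bookkeeping argument that recycles Theorem~\ref{thm:three} verbatim, with the single substitution of Theorem~\ref{THM2} for Theorem~\ref{THM}. The only point that deserves a sentence of care—and the one I would foreground—is the correspondence between the sign of $bc-d_1d_2$ and the sign of $\lambda_1\lambda_2$, since this is what guarantees that the branch selected by \eqref{eq:diasig} is consistent with the prescribed eigenvalues rather than their negatives.
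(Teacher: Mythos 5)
Your proposal is correct and follows essentially the same route as the paper: the corollary is obtained by noting that Theorem~\ref{THM2} and Theorem~\ref{THM} impose identical algebraic conditions, so the argument for Theorem~\ref{thm:three} goes through with the nonnegative version substituted for the real one. Your extra sentence verifying that the constructed matrix actually has eigenvalues $\lambda_1,\lambda_2$ (by matching trace and pinning the sign of $\det A = d_1d_2-bc$ through the branch of the inequality) is a point the paper leaves implicit and is a worthwhile clarification, but it does not change the underlying approach.
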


Note that conditions~\eqref{eq:eigdiasig} and~\eqref{eq:diasig}  cannot be directly generalized to higher dimensional cases.
The authors in~\cite{Wu2014} present the necessary and sufficient condition of the existence of a real matrix with a size greater than $2$ and having prescribed eigenvalues, singular values, and main diagonal entries. However, given eigenvalues, singular values, and main diagonal entries,
no study has yet demonstrated the construction of a nonnegative matrix with a size greater than $2\times 2$.  This difficulty can be tackled by the use of {our} numerical computations.

%
%
%
%

  \section*{Acknowledgment}
The authors wish to thank Prof. Michiel E. Hochstenbach for 
his highly valuable comments. They also thank Prof. Zheng-Jian Bai and Dr. Zhi Zhao for helpful discussions.

\bibliographystyle{abbrv}

\end{document}